\begin{document}
\title[ Positive solutions ]{Positive solutions of a nonlinear three-point boundary value problem with integral boundary conditions}
\author[F. Haddouchi]{Faouzi Haddouchi}
\address{Faouzi Haddouchi\\
Department of Physics, University of Sciences and Technology of
Oran, El Mnaouar, BP 1505, 31000 Oran, Algeria}
\email{haddouch@univ-usto.dz}
\subjclass[2000]{34B15, 34C25, 34B18}
\keywords{Positive solutions; Krasnoselskii's  fixed point theorem; Three-point integral boundary
value problems; Cone.}

\begin{abstract}
In this paper, by using Krasnoselskii's fixed point theorem in a cone, we study the
existence of single and multiple positive solutions to the three-point integral boundary value
problem (BVP)
\begin{equation*} \label{eq-1}
\begin{gathered}
{u^{\prime \prime }}(t)+a(t)f(u(t))=0,\ \ 0<t<T, \\
u^{\prime}(0)=0, \ u(T)={\alpha}\int_{0}^{\eta}u(s)ds,
\end{gathered}
\end{equation*}
where $0<{\eta}<T$, $0<{\alpha}< \frac{1}{{\eta}}$, are given constants.
\end{abstract}

\maketitle \numberwithin{equation}{section}
\newtheorem{theorem}{Theorem}[section]
\newtheorem{lemma}[theorem]{Lemma} \newtheorem{proposition}[theorem]{%
Proposition} \newtheorem{corollary}[theorem]{Corollary} \newtheorem{remark}[%
theorem]{Remark}
\newtheorem{exmp}{Example}[section]

\section{Introduction}
In this work, we study the existence of positive solutions of the following three-point integral
boundary value problem (BVP):

\begin{equation} \label{eq-2}
{u^{\prime \prime }}(t)+a(t)f(u(t))=0,\ t\in(0,T),
\end{equation}
\begin{equation} \label{eq-3}
u^{\prime}(0)=0, \ u(T)={\alpha}\int_{0}^{\eta}u(s)ds,
\end{equation}
where $0<{\eta}<T$ and $0<{\alpha}<\frac{1}{{\eta}}$, and
\begin{itemize}
\item[(B1)] $f\in C([0,\infty),[0,\infty))$;
\item[(B2)] $a\in C([0,T],[0,\infty))$ and there exists $t_{0}\in[0,T]$ such that
$a(t_{0})>0$.
\end{itemize}

Set
\begin{equation} \label{eq-4}
f_{0}=\lim_{u\rightarrow0^{+}}\frac{f(u)}{u}, \
f_{\infty}=\lim_{u\rightarrow\infty}\frac{f(u)}{u}.
\end{equation}

The study of the existence of solutions of multi-point boundary value
problems for linear second-order ordinary differential equations was
initiated by II'in and Moiseev \cite{Ilin}.
Then Gupta \cite{Gupt} studied three-point boundary value problems for nonlinear
second-order ordinary differential equations. Since then, the existence of positive
solutions for nonlinear second order three-point boundary-value problems has been
studied by many authors by using the fixed point theorem, nonlinear alternative of the Leray-Schauder
approach, or coincidence degree theory. We refer the reader to
\cite{Cheng,Guo,Han,Li,Pang,Sun1,Sun2,Xu,Ander1,Ander2,He,Ma1,Ma2,Ma3,Ma4,Ma5,Feng,Maran,Luo, Liang1, Liang2,
Liu1,Liu2,Liu3,Tarib,Webb1,Webb2,Webb3,Webb4,Webb5,Webb6,Webb7} and the references therein.

Liu \cite{Liu4} proved the existence of single and multiple positive solutions for the three-point integral
boundary value problem (BVP)
\begin{equation} \label{eq-5}
{u^{\prime \prime }}(t)+a(t)f(u(t))=0,\ t\in(0,1),
\end{equation}
\begin{equation} \label{eq-6}
u^{\prime}(0)=0, \ u(1)={\beta}u(\eta),
\end{equation}
where $0<{\eta}<1$ and $0<{\beta}<1$.

In \cite{Haddou1, Haddou2}, the authors considered the following three-point integral boundary value problem
(BVP)

\begin{equation} \label{eq-7}
{u^{\prime \prime }}(t)+a(t)f(u(t))=0,\ t\in(0,T),
\end{equation}
\begin{equation} \label{eq-8}
u(0)={\beta}u(\eta),\ u(T)={\alpha}\int_{0}^{\eta}u(s)ds,
\end{equation}
where $0<{\eta}<T$ and $0<{\alpha}<\frac{2T}{{\eta}^{2}}$,
$0\leq{\beta}<\frac{2T-\alpha\eta^{2}}{\alpha\eta^{2}-2\eta+2T}$,  $f\in C([0,\infty),[0,\infty))$, $a\in
C([0,T],[0,\infty))$ and there exists $t_{0}\in[\eta,T]$ such that $a(t_{0})>0$. They obtained the existence of
single and multiple positive solutions by using the Krasnoselskii's  fixed point theorem.

Motivated by the results of \cite{Liu4, Haddou1, Haddou2} the aim of this paper is to establish some simple
criterions for the existence of positive solutions of the BVP \eqref{eq-2},\eqref{eq-3}.
In Section 2, we prove several preliminary results that will be used to prove our results. In Section 3, we
discuss the existence of single positive solution for the BVP \eqref{eq-2},\eqref{eq-3} under $f_{0}=0$,
$f_{\infty}=\infty$ or $f_{0}=\infty$, $f_{\infty}=0$. In Section 4, we establish the existence conditions of
two positive solutions for the BVP \eqref{eq-2},\eqref{eq-3} under $f_{0}=f_{\infty}=\infty$ or
$f_{0}=f_{\infty}=0$. In Section 5, we also obtain some existence results for positive solutions of the BVP
\eqref{eq-2},\eqref{eq-3} under $f_{0}, f_{\infty}\not\in \left\{0,\infty\right\}$. Finally, in Section 6, we
give some examples to illustrate our results.

The key tool in our approach is the
following Krasnoselskii's fixed point theorem in a cone \cite{Krasn}.

\begin{theorem}\label{theo 2.1}\cite{Krasn}.
Let $E$ be a Banach space, and let $K\subset E$ be a cone. Assume
$\Omega_{1}$, $\Omega _{2}$ are open bounded subsets of $E$ with $0\in \Omega _{1}$%
, $\overline{\Omega }_{1}\subset \Omega _{2}$, and let

\begin{equation*}\label{eq-11}
A: K\cap (\overline{\Omega }_{2}\backslash  \Omega
_{1})\longrightarrow K
\end{equation*}

be a completely continuous operator such that either

(i) $\ \left\Vert Au\right\Vert \leq \left\Vert u\right\Vert $, $\
u\in K\cap \partial \Omega _{1}$, \ and $\left\Vert Au\right\Vert
\geq \left\Vert u\right\Vert $, $\ u\in K\cap \partial \Omega _{2}$;
or

(ii) $\left\Vert Au\right\Vert \geq \left\Vert u\right\Vert $, $\
u\in K\cap
\partial \Omega _{1}$, \ and $\left\Vert Au\right\Vert \leq \left\Vert
u\right\Vert $, $\ u\in K\cap \partial \Omega _{2}$

hold. Then $A$ has a fixed point in $K\cap
(\overline{\Omega}_{2}\backslash $ $\Omega _{1})$.
\end{theorem}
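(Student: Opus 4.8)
Since the statement above is the classical cone compression--expansion theorem of Krasnoselskii, the natural plan is to deduce it from the theory of the fixed point index. I would take as known that for a completely continuous operator $A:\overline{U}\to K$ with no fixed point on the (relative) boundary of a bounded relatively open subset $U$ of the cone $K$ there is a well-defined integer $i(A,U,K)$ satisfying normalization ($i$ equals $1$ when $A$ is a constant map with value in $U$), additivity/excision, homotopy invariance, and the solution property: $i(A,U,K)\neq 0$ forces a fixed point of $A$ in $U$. Since the operator in the statement is given only on $K\cap(\overline{\Omega}_2\setminus\Omega_1)$, the first step is to extend it by the Dugundji extension theorem to a completely continuous map on $K\cap\overline{\Omega}_2$ whose range is still contained in $K$ (possible because $K$ is closed and convex). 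If this extension is ever found to have a fixed point in $K\cap(\overline{\Omega}_2\setminus\Omega_1)$ we are finished, so in what follows I assume $A$ has no fixed point on $K\cap\partial\Omega_1$ nor on $K\cap\partial\Omega_2$.

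The core of the proof consists of two index computations, invoked with $\Omega\in\{\Omega_1,\Omega_2\}$ according to which inequality is assumed on which boundary; note $0\in\Omega$ in both cases. (a) If $\|Au\|\le\|u\|$ on $K\cap\partial\Omega$, then $Au\neq\lambda u$ for every $u\in K\cap\partial\Omega$ and every $\lambda\ge 1$, since $Au=\lambda u$ would give $\lambda\|u\|=\|Au\|\le\|u\|$ with $u\neq 0$, hence $\lambda=1$ and a boundary fixed point, already excluded. The homotopy $h(t,u)=tAu$ ($0\le t\le 1$) then has no fixed point on $K\cap\partial\Omega$ (at $t=0$ such a point would be $0\notin\partial\Omega$; at $t\in(0,1]$ it would be an eigenvector with eigenvalue $1/t\ge 1$), so homotopy invariance and normalization give $i(A,K\cap\Omega,K)=i(0,K\cap\Omega,K)=1$. (b) If $\|Au\|\ge\|u\|$ on $K\cap\partial\Omega$, then $r:=\inf\{\|u\|:u\in K\cap\partial\Omega\}>0$ (a ball about $0$ lies in $\Omega$), hence $\inf\{\|Au\|:u\in K\cap\partial\Omega\}\ge r>0$; and $Au\neq\lambda u$ for $u\in K\cap\partial\Omega$, $0<\lambda\le 1$, by the argument just used. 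Fixing $e\in K\setminus\{0\}$, the translation homotopy $h(t,u)=Au+te$ is fixed-point-free on $K\cap\overline{\Omega}$ for $t$ large (a crude norm bound, using that $A$ maps bounded sets into bounded sets) and has no fixed point on $K\cap\partial\Omega$ throughout the homotopy; this yields $i(A,K\cap\Omega,K)=0$.

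Finally I would combine the two values by additivity of the index: since $A$ has no fixed point on $K\cap\partial\Omega_1$,
\[ i(A,K\cap(\Omega_2\setminus\overline{\Omega}_1),K)=i(A,K\cap\Omega_2,K)-i(A,K\cap\Omega_1,K), \]
whose right-hand side is $0-1=-1$ in case (i) and $1-0=1$ in case (ii). In either case it is nonzero, so the solution property produces a fixed point of $A$ in $K\cap(\Omega_2\setminus\overline{\Omega}_1)\subset K\cap(\overline{\Omega}_2\setminus\Omega_1)$, as claimed. I expect the only genuinely delicate point to be step (b): showing that the translation homotopy stays free of boundary fixed points needs a careful choice of $e$ together with the positivity $\inf\|Au\|>0$, and this is precisely where the cone structure is essentially used. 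Of course, in the present paper the result is merely quoted from \cite{Krasn} and need not be reproved.
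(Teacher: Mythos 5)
The paper itself never proves Theorem \ref{theo 2.1}; it is simply quoted from \cite{Krasn}, so your argument has to be judged against the standard index-theoretic proofs in the literature, whose architecture (Dugundji extension into the cone, two index computations, additivity/excision) you reproduce correctly in outline. Step (a) is fine. The genuine gap is exactly the point you flag in step (b): from $\|Au\|\ge\|u\|$ on $K\cap\partial\Omega$ (plus absence of boundary fixed points) you cannot conclude that the translation homotopy $h(t,u)=Au+te$ is fixed-point-free on $K\cap\partial\Omega$ for $t>0$. The relation $u-Au=te\in K\setminus\{0\}$ is perfectly compatible with $\|Au\|\ge\|u\|$, because norms are not monotone on cones: in $E=\mathbb{R}^2$ with the sup norm and $K$ the positive quadrant, $u=(1,1)$, $Au=(1,0)$, $e=(0,1)$, $t=1$ give $u=Au+te$, $Au\neq u$, $\|Au\|=\|u\|$. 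Neither the positivity $\inf_{K\cap\partial\Omega}\|Au\|>0$ nor a ``careful choice of $e$'' is known to repair this in an arbitrary Banach space (for non-normal cones there is no obvious direction $e$ that is avoided by $u-Au$), so as written the computation $i(A,K\cap\Omega,K)=0$ is unjustified; the hypothesis that does feed the translation homotopy is the stronger ``$u-Au\neq te$ for all $t\ge0$'' form of the expansion condition, which is not what the theorem assumes.

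The standard repair keeps your eigenvalue observation ($Au\neq\mu u$ for $0<\mu\le1$ on $K\cap\partial\Omega$) but uses a different deformation: fix $\lambda_0>1$ and set $H(t,u)=(1-t)Au+t\lambda_0 u$, $t\in[0,1]$. If $H(t,u)=u$ with $u\in K\cap\partial\Omega$, then $(1-t)Au=(1-t\lambda_0)u$; when $1-t\lambda_0\le0$ both sides lie in $K\cap(-K)=\{0\}$, forcing either $u=0\notin\partial\Omega$ or $Au=0$, which contradicts $\|Au\|\ge\|u\|>0$; when $1-t\lambda_0>0$ one gets $Au=\mu u$ with $0<\mu<1$, again contradicting $\|Au\|\ge\|u\|$. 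Hence $i(A,K\cap\Omega,K)=i(\lambda_0 I,K\cap\Omega,K)$, and for $\lambda_0 I$ your translation homotopy does work, since $u=\lambda_0u+te$ gives $(\lambda_0-1)u=-te\in K\cap(-K)=\{0\}$, so $i(\lambda_0 I,K\cap\Omega,K)=0$. With this substitution (or by simply invoking the two index lemmas in Guo--Lakshmikantham), your concluding additivity argument, which yields index $\pm1$ on $K\cap(\Omega_2\setminus\overline{\Omega}_1)$ and hence a fixed point of the original operator there, goes through.
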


\section{Preliminaries}
To prove the main existence results we will employ several
straightforward lemmas.

\begin{lemma}\label{lem 2.1}
Let $\alpha\eta\neq1$. Then for $y\in C([0,T],\mathbb{R})$, the problem
\begin{equation}\label{eq-9}
{u^{\prime \prime }}(t)+y(t)=0, \ t\in (0,T),
\end{equation}
\begin{equation}\label{eq-10}
{u^{\prime}}(0)=0, \ u(T)=\alpha \int_{0}^{\eta }u(s)ds
\end{equation}
has a unique solution
\begin{eqnarray*}
u(t)&=&\frac{1}{1-\alpha\eta}\int_{0}^{T}(T-s)y(s)ds-\frac{\alpha}{2(1-\alpha\eta)}\int_{0}^{\eta }(\eta
-s)^{2}y(s)ds \\
&&-\int_{0}^{t}(t-s)y(s)ds.
\end{eqnarray*}
\end{lemma}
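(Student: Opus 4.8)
The plan is to solve the linear problem \eqref{eq-9}--\eqref{eq-10} directly by two integrations and then pin down the integration constants from the boundary conditions. First I would integrate \eqref{eq-9} once from $0$ to $t$; using $u'(0)=0$ this gives $u'(t)=-\int_0^t y(s)\,ds$. Integrating a second time from $0$ to $t$ and interchanging the order of integration (legitimate since $y$ is continuous on a bounded triangle) rewrites the iterated integral $\int_0^t\!\int_0^\tau y(s)\,ds\,d\tau$ as $\int_0^t (t-s)y(s)\,ds$, so that
\[
u(t)=u(0)-\int_0^t (t-s)y(s)\,ds,
\]
with the constant $u(0)$ still to be determined.

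Next I would impose the three-point integral condition $u(T)=\alpha\int_0^\eta u(s)\,ds$. Evaluating the formula above at $t=T$ gives $u(T)=u(0)-\int_0^T (T-s)y(s)\,ds$. For the right-hand side, substituting the same formula and again switching the order of integration, one has $\int_0^\eta\!\int_0^s (s-r)y(r)\,dr\,ds=\int_0^\eta\bigl(\int_r^\eta (s-r)\,ds\bigr)y(r)\,dr=\tfrac12\int_0^\eta(\eta-s)^2 y(s)\,ds$, hence
\[
\int_0^\eta u(s)\,ds=\eta\,u(0)-\frac12\int_0^\eta(\eta-s)^2 y(s)\,ds.
\]
Equating the two expressions for $u(T)$ and collecting the $u(0)$ terms yields $(1-\alpha\eta)u(0)=\int_0^T(T-s)y(s)\,ds-\frac{\alpha}{2}\int_0^\eta(\eta-s)^2 y(s)\,ds$; since $\alpha\eta\neq1$ we may divide by $1-\alpha\eta$ to obtain the stated value of $u(0)$, and substituting it back into the displayed formula for $u(t)$ gives exactly the claimed expression. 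A direct double differentiation confirms that this $u$ solves \eqref{eq-9} and satisfies both conditions in \eqref{eq-10}.

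Finally, for uniqueness I would argue that the difference $w$ of two solutions solves $w''=0$ on $(0,T)$ with $w'(0)=0$ and $w(T)=\alpha\int_0^\eta w(s)\,ds$; the first two conditions force $w\equiv c$ for some constant $c$, and then the integral condition gives $c=\alpha\eta c$, i.e. $c(1-\alpha\eta)=0$, so $c=0$ because $\alpha\eta\neq1$. I do not anticipate any genuine obstacle here: the computation is routine, and the only points requiring a little care are the two interchanges of the order of integration used to rewrite the iterated integrals and the bookkeeping when solving the linear equation for $u(0)$.
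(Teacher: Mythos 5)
Your proposal is correct and follows essentially the same route as the paper: integrate twice using $u'(0)=0$ to get $u(t)=u(0)-\int_0^t(t-s)y(s)\,ds$, integrate this over $[0,\eta]$, impose $u(T)=\alpha\int_0^\eta u(s)\,ds$, and solve $(1-\alpha\eta)u(0)=\int_0^T(T-s)y(s)\,ds-\frac{\alpha}{2}\int_0^\eta(\eta-s)^2y(s)\,ds$ for $u(0)$. Your explicit uniqueness argument (the difference $w$ satisfies $w''=0$, $w'(0)=0$, hence is a constant $c$ with $c(1-\alpha\eta)=0$) is a small addition the paper leaves implicit, but it changes nothing essential.
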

\begin{proof}
From \eqref{eq-9}, we have
\begin{equation}\label{eq-11}
u(t)=u(0)-\int_{0}^{t}(t-s)y(s)ds
\end{equation}
Integrating \eqref{eq-11} from $0$ to $\eta $, where $\eta \in
(0,T)$, we have
\begin{eqnarray*}
\int_{0}^{\eta }u(s)ds=u(0){\eta}-\frac{1}{2}\int_{0}^{\eta }(\eta -s)^{2}y(s)ds.
\end{eqnarray*}
Since
\begin{equation*}\label{eq-12}
u(T)=u(0)-\int_{0}^{T}(T-s)y(s)ds.
\end{equation*}
From $\ u(T)=\alpha \int_{0}^{\eta }u(s)ds$, we have
\begin{equation*}\label{eq-13}
(1-\alpha \eta )u(0)=\int_{0}^{T}(T-s)y(s)ds-\frac{\alpha }{2}\int_{0}^{\eta }(\eta
-s)^{2}y(s)ds.
\end{equation*}
Therefore,
\begin{eqnarray*}
u(0)=\frac{1}{1-\alpha \eta}\int_{0}^{T}(T-s)y(s)ds -\frac{\alpha} {2(1-\alpha \eta)}\int_{0}^{\eta }(\eta
-s)^{2}y(s)ds,
\end{eqnarray*}
from which it follows that
\begin{eqnarray*}
u(t)&=&\frac{1}{1-\alpha\eta}\int_{0}^{T}(T-s)y(s)ds-\frac{\alpha}{2(1-\alpha\eta)}\int_{0}^{\eta }(\eta
-s)^{2}y(s)ds \\
&&-\int_{0}^{t}(t-s)y(s)ds.
\end{eqnarray*}
\end{proof}

\begin{lemma}\label{lem 2.2}
Let $0<\alpha <\frac{1}{\eta}$. If $y\in C([0,T],[0,\infty
))$, then the unique solution $u$ of
\eqref{eq-9}-\eqref{eq-10} satisfies $\ u(t)\geq 0$ for $t\in [0,T]$.
\end{lemma}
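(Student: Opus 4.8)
The plan is to work directly from the closed-form solution furnished by Lemma~\ref{lem 2.1} and show that under the hypothesis $0<\alpha<\tfrac1\eta$ (so in particular $1-\alpha\eta>0$ and $y\ge 0$), each of the three terms assembles into a nonnegative quantity. Writing
\begin{equation*}
u(t)=\frac{1}{1-\alpha\eta}\int_{0}^{T}(T-s)y(s)ds-\frac{\alpha}{2(1-\alpha\eta)}\int_{0}^{\eta}(\eta-s)^{2}y(s)ds-\int_{0}^{t}(t-s)y(s)ds,
\end{equation*}
the first integral is nonnegative, while the second and third are subtracted; the task is to dominate those two subtracted terms by the first.

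The key observation I would use is a pointwise comparison of the integrand coefficients on the relevant intervals. For the third term: since $t\le T$ and $y\ge 0$,
\begin{equation*}
\int_{0}^{t}(t-s)y(s)ds\le\int_{0}^{t}(T-s)y(s)ds\le\int_{0}^{T}(T-s)y(s)ds,
\end{equation*}
and since $1-\alpha\eta<1$ (as $\alpha\eta>0$) we have $\dfrac{1}{1-\alpha\eta}\ge 1$, so this term is already absorbed by a portion of the first term. For the second term I would compare $\dfrac{\alpha}{2}(\eta-s)^{2}$ with $(T-s)$ on $[0,\eta]$: since $s<\eta<T$ we have $\eta-s<T-s$ and $\alpha\eta<1$, hence $\dfrac{\alpha}{2}(\eta-s)^{2}<\dfrac{\alpha\eta}{2}(\eta-s)<\dfrac12(T-s)<(T-s)$, so
\begin{equation*}
\frac{\alpha}{2}\int_{0}^{\eta}(\eta-s)^{2}y(s)ds\le\int_{0}^{\eta}(T-s)y(s)ds\le\int_{0}^{T}(T-s)y(s)ds.
\end{equation*}
Dividing by $1-\alpha\eta>0$ shows each subtracted term is at most $\dfrac{1}{1-\alpha\eta}\int_{0}^{T}(T-s)y(s)ds$, which at first glance looks too weak because I seem to need their \emph{sum} bounded by that single quantity.

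To close the gap cleanly I would instead keep the sign of the first term intact and argue more carefully: rewrite the first two terms as $\dfrac{1}{1-\alpha\eta}\big[\int_0^T(T-s)y(s)ds-\tfrac{\alpha}{2}\int_0^\eta(\eta-s)^2y(s)ds\big]$, note this bracket is itself $\ge \int_0^T(T-s)y(s)ds-\int_0^T(T-s)y(s)ds\cdot(\alpha\eta/2)\ge(1-\tfrac{\alpha\eta}{2})\int_0^T(T-s)y(s)ds\ge 0$ using the comparison above, and moreover the bracket is $\ge(1-\alpha\eta)\int_0^T(T-s)y(s)ds$ whenever that helps; then since $\dfrac{1}{1-\alpha\eta}\ge 1$, the first two terms together are $\ge\int_0^T(T-s)y(s)ds\ge\int_0^t(t-s)y(s)ds$, giving $u(t)\ge0$. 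The only real subtlety — and the step I expect to be the main obstacle — is getting the constants in this last chain of inequalities to line up so that the factor $\tfrac{1}{1-\alpha\eta}$ genuinely compensates for \emph{both} subtracted pieces simultaneously; this is where I would be most careful, possibly splitting the first integral $\int_0^T=\int_0^\eta+\int_\eta^T$ and matching the $\int_0^\eta$ part against the second term and the $\int_\eta^T$ part plus the surplus from $\tfrac{1}{1-\alpha\eta}$ against the third term. Everything else is routine estimation with nonnegative integrands.
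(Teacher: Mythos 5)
Your direct estimate from the representation formula of Lemma \ref{lem 2.1} is correct, and the step you flagged as the main obstacle actually closes without any splitting of the first integral: on $[0,\eta]$ one has $(\eta-s)^{2}\le\eta(\eta-s)\le\eta(T-s)$, hence $\frac{\alpha}{2}\int_{0}^{\eta}(\eta-s)^{2}y(s)ds\le\frac{\alpha\eta}{2}\int_{0}^{T}(T-s)y(s)ds$, so the bracket satisfies $\int_{0}^{T}(T-s)y(s)ds-\frac{\alpha}{2}\int_{0}^{\eta}(\eta-s)^{2}y(s)ds\ge\left(1-\frac{\alpha\eta}{2}\right)\int_{0}^{T}(T-s)y(s)ds\ge(1-\alpha\eta)\int_{0}^{T}(T-s)y(s)ds$; dividing by $1-\alpha\eta>0$ and subtracting $\int_{0}^{t}(t-s)y(s)ds\le\int_{0}^{T}(T-s)y(s)ds$ gives $u(t)\ge0$, exactly as in your sketch. (One caution: keep the factor $\frac{\alpha\eta}{2}$ throughout; your weakened pointwise bound $\frac{\alpha}{2}(\eta-s)^{2}<\frac{1}{2}(T-s)$ alone would only give the bracket $\ge\frac{1}{2}\int_{0}^{T}(T-s)y(s)ds$, which is not enough when $\alpha\eta<\frac{1}{2}$.) This is a genuinely different route from the paper, which never returns to the explicit formula: there, $u''=-y\le0$ and $u'(0)=0$ make $u$ concave and nonincreasing, so it suffices to show $u(T)\ge0$; assuming $u(T)<0$, concavity gives $\int_{0}^{\eta}u(s)ds\ge\frac{\eta}{2}(u(0)+u(\eta))$, and since that integral is then negative and $\alpha<\frac{1}{\eta}$, $u(T)=\alpha\int_{0}^{\eta}u(s)ds>\frac{1}{\eta}\int_{0}^{\eta}u(s)ds\ge\frac{u(0)+u(\eta)}{2}\ge u(\eta)$, contradicting monotonicity. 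Your computation is more elementary and self-contained and even yields a quantitative lower bound, whereas the paper's concavity--monotonicity argument builds machinery that is reused: the same inequality $\int_{0}^{\eta}u(s)ds\ge\frac{\eta}{2}(u(0)+u(\eta))$ and the monotonicity reappear in Lemma \ref{lem 2.3} (nonexistence for $\alpha>\frac{1}{\eta}$) and Lemma \ref{lem 2.4} (the cone constant $\gamma$), so it buys more for the remainder of the paper.
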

\begin{proof}
From the fact that ${u^{\prime \prime }}(t)=-y(t)\leq0$ , we know that the graph of $u(t)$ is
concave down on $\left(0,T\right)$ and ${u^{\prime}}(t)$ monotone decreasing. Thus ${u^{\prime}}(t)\leq
{u^{\prime}}(0)=0$
and $u(t)$ is a monotone decreasing function, this is $u(t)\geq u(T)$ ($t\in\left[0,T\right]$). So, if
$u(T)\geq0$, then $u(t)\geq0$ for $t\in\left[0,T\right]$.

If $u(T)<0$, then $\int_{0}^{\eta }u(s)ds<0$.
Since the graph of $u$ is concave down, we get
\begin{equation} \label{eq-14}
\int_{0}^{\eta }u(s)ds\geq \frac{\eta }{2}(u(0)+u(\eta )).
\end{equation}
It implies
\begin{equation} \label{eq-15}
u(T)=\alpha \int_{0}^{\eta }u(s)ds>\frac{1}{\eta}\int_{0}^{\eta }u(s)ds\geq\frac{u(0)+u(\eta)}{2}.
\end{equation}
Since $u(t)$ is monotone decreasing, by \eqref{eq-15}, we obtain
\begin{equation} \label{eq-16}
u(T)>u(\eta),
\end{equation}
which contradicts the fact that $u(t)$ is a monotone decreasing function.
\end{proof}

\begin{lemma}\label{lem 2.3}
Let $\ \alpha >\frac{1}{\eta}$. If $y\in C([0,T],[0,\infty
))$, then the problem \eqref{eq-9}-\eqref{eq-10} has no positive solution.
\end{lemma}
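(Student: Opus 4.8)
The plan is to show directly that the unique solution $u$ provided by Lemma \ref{lem 2.1} cannot be positive, arguing by contradiction. The starting point is the same structural observation used in Lemma \ref{lem 2.2}: since $u''(t)=-y(t)\le 0$, the function $u$ is concave on $[0,T]$; since $u'(0)=0$ and $u'$ is non-increasing, $u'(t)\le 0$ on $[0,T]$, so $u$ is non-increasing. In particular $u(T)=\min_{[0,T]}u$ and $u(s)\ge u(\eta)\ge u(T)$ for every $s\in[0,\eta]$ (using $\eta<T$).

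Next I would assume, for contradiction, that $u$ is a positive solution, so that $u\ge 0$ on $[0,T]$, $u\not\equiv 0$, and hence $u(T)\ge 0$. From $u(s)\ge u(T)\ge 0$ on $[0,\eta]$ we obtain $\int_{0}^{\eta}u(s)\,ds\ge \eta\,u(T)$, and then the boundary condition gives
\[
u(T)=\alpha\int_{0}^{\eta}u(s)\,ds\ge \alpha\eta\,u(T).
\]
Since $\alpha>\frac{1}{\eta}$, i.e. $\alpha\eta>1$, this inequality forces $u(T)\le 0$, hence $u(T)=0$.

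It then remains to rule out the degenerate case $u(T)=0$. With $u(T)=0$ the boundary condition yields $\int_{0}^{\eta}u(s)\,ds=0$; since $u$ is continuous and $u(s)\ge u(T)=0$ on $[0,\eta]$, this forces $u\equiv 0$ on $[0,\eta]$, and in particular $u(\eta)=0$. Because $u$ is non-increasing, $u(t)\le u(\eta)=0$ for $t\in[\eta,T]$, while $u(t)\ge u(T)=0$, so $u\equiv 0$ on $[0,T]$, contradicting $u\not\equiv 0$. Hence the problem \eqref{eq-9}--\eqref{eq-10} has no positive solution.

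I do not expect any real obstacle here: everything follows from concavity together with $u'(0)=0$, exactly as in Lemma \ref{lem 2.2}. The only point needing a little care is the degenerate case $u(T)=0$, which must be excluded using the monotonicity of $u$ rather than the strict inequality $\alpha\eta>1$ alone; if one adopts the stronger notion of positive solution ($u>0$ on $[0,T]$), this last paragraph becomes unnecessary since the displayed inequality is immediately contradictory.
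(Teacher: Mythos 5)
Your proof is correct, and it follows the same overall strategy as the paper: deduce from $u''\le 0$ and $u'(0)=0$ that $u$ is non-increasing on $[0,T]$, and then contradict the boundary condition $u(T)=\alpha\int_{0}^{\eta}u(s)\,ds$ using $\alpha\eta>1$. The difference lies in the key estimate. The paper splits into the cases $u(T)>0$ and $u(T)=0$: when $u(T)>0$ it invokes the concavity (trapezoid) inequality \eqref{eq-14}, $\int_{0}^{\eta}u(s)\,ds\ge\frac{\eta}{2}\,(u(0)+u(\eta))$, to obtain $u(T)>\frac{u(0)+u(\eta)}{2}\ge u(\eta)$, contradicting monotonicity; the case $u(T)=0$ is handled exactly as in your last paragraph. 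You instead use only the cruder monotonicity bound $u(s)\ge u(T)$ on $[0,\eta]$, which yields $u(T)\ge\alpha\eta\,u(T)$ and hence $u(T)=0$ outright, so the two cases collapse into the single degenerate one. This is a mild simplification: in your version concavity enters only through the monotonicity of $u$ (via $u'(0)=0$), with no integral estimate beyond $\int_{0}^{\eta}u\ge\eta\,u(T)$, and the conclusion is otherwise reached exactly as in the paper.
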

\begin{proof}
Suppose that problem \eqref{eq-9}-\eqref{eq-10} has a positive
solution $u$ satisfying $u(t)\geq 0,$ $t\in [0,T]$.

If $u(T)>0$, then $\int_{0}^{\eta }u(s)ds>0$. It implies
\begin{equation*} \label{eq-17}
u(T)=\alpha\int_{0}^{\eta }u(s)ds > \frac{u(0)+u(\eta)}{2}\geq u(\eta),
\end{equation*}
that is
\begin{equation*} \label{eq-18}
u(T)>u(\eta),
\end{equation*}
which is a contradiction to the fact that $u(t)$ is a monotone decreasing function.

If  $u(T)=0$, then $\int_{0}^{\eta }u(s)ds=0$, this is $u(t)\equiv 0$ for all $%
t\in [0,\eta ].$ If there exists  $t_{0}\in (\eta ,T)$ such that $%
u(t_{0})>0$, then  $\ u(0)=u(\eta )<u(t_{0})$, a contradiction with the fact that $u$ is monotone decreasing.
Therefore, no positive solutions exist.
\end{proof}

\begin{lemma}\label{lem 2.4}
Let $0<\alpha <\frac{1}{\eta}$. If $y\in
C([0,T],[0,\infty ))$, then the unique solution $u$ of
the problem \eqref{eq-9}-\eqref{eq-10} satisfies
\begin{equation} \label{eq-19}
\min_{t\in [0,T]}u(t)\geq \gamma \|u\|,
\end{equation}
where
\begin{equation} \label{eq-20}
\gamma=\frac{\alpha\eta(T-\eta)}{T-\alpha\eta^{2}}, \ \|u\|=\max_{t\in
[0,T]}|u(t)|.
\end{equation}
\end{lemma}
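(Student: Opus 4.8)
The plan is to exploit the two structural facts already established for the solution $u$ of \eqref{eq-9}--\eqref{eq-10}: by Lemma~\ref{lem 2.2} we have $u(t)\geq0$ on $[0,T]$, and from $u''(t)=-y(t)\leq0$ together with $u'(0)=0$ the function $u$ is concave and monotone decreasing on $[0,T]$ (this is exactly the argument opening the proof of Lemma~\ref{lem 2.2}). Consequently $\|u\|=\max_{t\in[0,T]}u(t)=u(0)$ and $\min_{t\in[0,T]}u(t)=u(T)$, so the claimed inequality \eqref{eq-19} reduces to showing $u(T)\geq\gamma\,u(0)$.

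First I would bound the integral term from below using monotonicity: since $u$ is decreasing, $u(s)\geq u(\eta)$ for all $s\in[0,\eta]$, hence
\begin{equation*}
u(T)=\alpha\int_{0}^{\eta}u(s)\,ds\geq\alpha\eta\,u(\eta).
\end{equation*}
Next I would use concavity to control $u(\eta)$ from below by the endpoint values: the graph of $u$ lies above the chord joining $(0,u(0))$ and $(T,u(T))$, so evaluating that chord at $t=\eta$ gives
\begin{equation*}
u(\eta)\geq\frac{(T-\eta)u(0)+\eta\,u(T)}{T}.
\end{equation*}

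Combining the two displays yields $T\,u(T)\geq\alpha\eta(T-\eta)u(0)+\alpha\eta^{2}u(T)$, i.e. $(T-\alpha\eta^{2})u(T)\geq\alpha\eta(T-\eta)u(0)$. Since $0<\alpha<\frac{1}{\eta}$ forces $\alpha\eta^{2}<\eta<T$, the factor $T-\alpha\eta^{2}$ is positive, and dividing gives exactly $u(T)\geq\gamma\,u(0)$ with $\gamma$ as in \eqref{eq-20}; one also checks along the way that $\alpha\eta<1$ makes $\gamma\in(0,1)$. I do not expect any genuine obstacle here — the only point requiring a little care is selecting the right elementary estimates (decreasing monotonicity on $[0,\eta]$ for the integral, and the $0$-to-$T$ chord rather than some other chord for $u(\eta)$) so that the algebra collapses to the precise constant $\gamma=\frac{\alpha\eta(T-\eta)}{T-\alpha\eta^{2}}$ instead of a weaker bound; the degenerate case $u\equiv0$ is trivially covered.
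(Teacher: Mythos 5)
Your proof is correct and is essentially the paper's own argument: both reduce the claim to $u(T)\geq\gamma u(0)$ via monotonicity, use $u(T)=\alpha\int_{0}^{\eta}u(s)\,ds\geq\alpha\eta u(\eta)$, and then invoke concavity relating $u(0)$, $u(\eta)$, $u(T)$. Your chord inequality $u(\eta)\geq\frac{(T-\eta)u(0)+\eta u(T)}{T}$ is just an algebraic rewriting of the paper's secant estimate $u(0)\leq u(T)+\frac{u(T)-u(\eta)}{T-\eta}(0-T)$, so the two derivations coincide.
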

\begin{proof}
By Lemma \ref{lem 2.2}, we know that
\begin{equation*} \label{eq-21}
u(T)\leq u(t)\leq u(0).
\end{equation*}
So
\begin{equation} \label{eq-22}
\min_{t\in [0,T]}u(t)=u(T), \ \ \max_{t\in [0,T]}u(t)=u(0).
\end{equation}
From \eqref{eq-14} and the fact that $u$ is monotone decreasing, we get
\begin{equation} \label{eq-23}
u(T)=\alpha\int_{0}^{\eta }u(s)ds \geq \alpha\frac{\eta }{2}(u(0)+u(\eta ))\geq\alpha\eta{u(\eta)}.
\end{equation}
Using the concavity of $u$ and \eqref{eq-10},\eqref{eq-14} and \eqref{eq-23} we have
\begin{eqnarray*}
u(0)&\leq&u(T)+\frac{u(T)-u(\eta)}{T-\eta}(0-T)\\
&\leq&u(T)\left[1-T\frac{1-\frac{1}{\alpha\eta}}{T-\eta}\right]\\
&=&u(T)\frac{T-\alpha\eta^{2}}{\alpha\eta(T-\eta)}.
\end{eqnarray*}
Combining this with \eqref{eq-22}, we obtain
\begin{equation*} \label{eq-24}
\min_{t\in [0,T]}u(t)\geq \frac{\alpha\eta(T-\eta)}{T-\alpha\eta^{2}} \|u\|.
\end{equation*}
\end{proof}
Let $E = C([0,T],\mathbb{R})$, and only the sup norm is used. It is easy to see that the BVP
\eqref{eq-2},\eqref{eq-3} has a solution  $u = u(t)$ if and only if  $u$ is a fixed point of operator $A$, where
$A$ is defined by
\begin{equation}\label{eq-25}
\begin{split}
Au(t)&=\frac{1}{1-\alpha\eta}\int_{0}^{T}(T-s)a(s)f(u(s))ds \\
&-\frac{\alpha}{2(1-\alpha \eta)}\int_{0}^{\eta }(\eta -s)^{2}a(s)f(u(s))ds \\
&-\int_{0}^{t}(t-s)a(s)f(u(s))ds.
\end{split}
\end{equation}

Denote
\begin{equation}\label{eq-26}
K=\left\{u\in E: u\geq0,  \min_{t\in
[0,T]}u(t)\geq \gamma \|u\|\right\},
\end{equation}
where $\gamma$ is defined in \eqref{eq-20}.
It is obvious that $K$ is a cone in $E$. Moreover, by Lemma \ref{lem 2.2} and Lemma \ref{lem 2.4}, $AK\subset
K$. It is also easy to see that $A:K\rightarrow K$ is completely
continuous.\\

In what follows, for the sake of convenience, set
\begin{equation*}\label{eq-27}
\Lambda_{1}=\frac{1-\alpha\eta}{\int_{0}^{T}(T-s)a(s)ds},
\end{equation*}
\begin{equation*}\label{eq-28}
\Lambda_{2}=\frac{1-\alpha\eta}{\gamma\left(\int_{\eta}^{T}(T-s)a(s)ds+\frac{1}{2}\int_{0}^{\eta}\left[2(T-\eta)+\alpha(\eta^{2}-s^{2})\right]a(s)ds\right)}.
\end{equation*}

\section{The existence results of the BVP \eqref{eq-2},\eqref{eq-3} for the case: $f_{0}=0,\ f_{\infty}=\infty$
or  $f_{0}=\infty,\ f_{\infty}=0$}

In this section, we establish the existence of single positive solution for the
BVP \eqref{eq-2},\eqref{eq-3} under $f_{0}=0,\ f_{\infty}=\infty$ or  $f_{0}=\infty,\ f_{\infty}=0$

\begin{theorem}\label{theo 3.1}
The BVP \eqref{eq-2},\eqref{eq-3} has at least one positive solution in the case
\begin{itemize}
\item[(H1)]
$f_{0}=0,\ f_{\infty}=\infty$ or
\item[(H2)] $f_{0}=\infty,\ f_{\infty}=0$.
\end{itemize}
\end{theorem}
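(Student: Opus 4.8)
The plan is to apply Krasnoselskii's fixed point theorem (Theorem \ref{theo 2.1}) to the completely continuous operator $A: K \to K$ defined in \eqref{eq-25}, with the cone $K$ from \eqref{eq-26}. For both cases I will take $\Omega_1$ and $\Omega_2$ to be open balls $\Omega_r = \{u \in E : \|u\| < r\}$ centered at $0$, and produce two radii $0 < r_1 < r_2$ so that $A$ is norm-decreasing on one boundary and norm-increasing on the other. The two key estimates I need are an upper bound of the form $\|Au\| \le \Lambda_1^{-1}(\max_{0\le s\le T}\ldots)$ — more precisely, using \eqref{eq-25}, $\|Au\| = Au(0) \le \frac{1}{1-\alpha\eta}\int_0^T (T-s)a(s)f(u(s))\,ds$, which is controlled by $\frac{1}{1-\alpha\eta}\big(\sup_{0\le u\le r} f(u)\big)\int_0^T(T-s)a(s)\,ds$; and a lower bound $\|Au\| \ge Au(T) \ge \gamma$-weighted integral of $f(u(s))$ over the portion of $[0,T]$ where the Green's function stays positive, which is exactly where the constant $\Lambda_2$ comes from. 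Since for $u \in K$ we have $u(s) \ge \gamma\|u\|$ on $[0,T]$, on $\partial\Omega_r$ one gets $f(u(s)) \ge (\inf_{\gamma r \le u \le r} f(u))$-type bounds.

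Here is how I would carry it out for case (H1), $f_0 = 0$, $f_\infty = \infty$. First, since $f_0 = 0$, choose $\varepsilon > 0$ small enough that $\varepsilon \le \Lambda_1$; then there is $r_1 > 0$ with $f(u) \le \varepsilon u$ for $0 \le u \le r_1$. For $u \in K \cap \partial\Omega_{r_1}$, the upper estimate gives
\begin{eqnarray*}
\|Au\| &\le& \frac{1}{1-\alpha\eta}\int_0^T (T-s)a(s)f(u(s))\,ds \\
&\le& \frac{\varepsilon}{1-\alpha\eta}\int_0^T (T-s)a(s)\,ds\,\|u\| \le \|u\|,
\end{eqnarray*}
so $\|Au\| \le \|u\|$ on $\partial\Omega_{r_1}$. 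Next, since $f_\infty = \infty$, choose $M > 0$ large enough that $M\gamma \ge \Lambda_2$; then there is $R > 0$ with $f(u) \ge Mu$ for $u \ge R$. Put $r_2 = \max\{2r_1, R/\gamma\}$, so that for $u \in K \cap \partial\Omega_{r_2}$ we have $u(s) \ge \gamma\|u\| = \gamma r_2 \ge R$ for all $s$, hence $f(u(s)) \ge M u(s) \ge M\gamma r_2$. Using $\|Au\| \ge Au(T)$ and the positivity of the relevant part of the kernel one obtains $\|Au\| \ge M\gamma r_2 \cdot \frac{1}{\Lambda_2 \cdot (1-\alpha\eta)^{-1}\cdot(\text{that integral})} \ge r_2 = \|u\|$; the arithmetic is arranged precisely so that the bracketed integral in the denominator of $\Lambda_2$ appears. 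Thus $\|Au\| \ge \|u\|$ on $\partial\Omega_{r_2}$, and alternative (i) of Theorem \ref{theo 2.1} yields a fixed point in $K \cap (\overline{\Omega}_{r_2}\setminus\Omega_{r_1})$, which is a positive solution.

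For case (H2), $f_0 = \infty$, $f_\infty = 0$, I would swap the roles: $f_0 = \infty$ gives a small $r_1$ on whose boundary $\|Au\| \ge \|u\|$ (using the lower estimate and $M\gamma \ge \Lambda_2$), and $f_\infty = 0$ gives a large $r_2$ on whose boundary $\|Au\| \le \|u\|$ — here the one subtlety is that $f$ need not be bounded, so I first handle the "sublinear at infinity" condition by picking $\varepsilon \le \Lambda_1$, getting $N$ with $f(u) \le \varepsilon u$ for $u \ge N$, setting $\beta = \max_{0\le u\le N} f(u)$, and choosing $r_2$ large enough that $\frac{1}{1-\alpha\eta}\big(\beta + \varepsilon r_2\big)\int_0^T(T-s)a(s)\,ds \le r_2$; this is possible because the left side grows linearly with slope $\varepsilon(1-\alpha\eta)^{-1}\int_0^T(T-s)a(s)\,ds \le 1$. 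Then alternative (ii) of Theorem \ref{theo 2.1} applies. The main obstacle — really the only place requiring care — is verifying the lower bound $\|Au\| \ge Au(T) \ge (\text{const})\int(\ldots)f(u(s))\,ds$ with the correct constant matching $\Lambda_2$: one must compute $Au(T)$ from \eqref{eq-25}, combine the three integrals over $[0,T]$, $[0,\eta]$, and $[0,T]$ into a single integral with nonnegative kernel, and identify the explicit weight $2(T-\eta) + \alpha(\eta^2 - s^2)$ on $[0,\eta]$ together with $2(T-s)$ on $[\eta,T]$; once that kernel computation is done and $u(s) \ge \gamma\|u\|$ is used, the rest is bookkeeping with $\varepsilon$, $M$, and the radii.
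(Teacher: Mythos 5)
Your overall scheme is the paper's: Krasnoselskii's theorem on the cone $K$ with two concentric balls, the upper estimate obtained by discarding the two negative terms in \eqref{eq-25} (which produces $\Lambda_1$), and the lower estimate obtained by evaluating $Au$ at a single point and using $\min_{t\in[0,T]}u(t)\geq\gamma\|u\|$. The genuine problem is in the step you yourself single out as the crux: the kernel identification for $Au(T)$. Evaluating \eqref{eq-25} at $t=T$ and combining the integrals gives, for $s\in[\eta,T]$, the weight $\frac{T-s}{1-\alpha\eta}-(T-s)=\frac{\alpha\eta}{1-\alpha\eta}(T-s)$, and for $s\in[0,\eta]$ the weight $\frac{\alpha}{2(1-\alpha\eta)}\bigl[2\eta(T-s)-(\eta-s)^{2}\bigr]$. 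The weights you claim, namely $\frac{1}{2(1-\alpha\eta)}\bigl[2(T-\eta)+\alpha(\eta^{2}-s^{2})\bigr]$ on $[0,\eta]$ and $\frac{T-s}{1-\alpha\eta}$ on $[\eta,T]$, are what one obtains by evaluating at $t=\eta$, not at $t=T$. Since $\alpha\eta<1$, the true $Au(T)$ kernel is strictly smaller (on $[\eta,T]$ it carries the extra factor $\alpha\eta<1$, and at $s=0$ the two $[0,\eta]$ weights differ by exactly $-(T-\eta)<0$). Consequently the chain ``$Au(T)\geq\frac{\gamma r_{2}M}{1-\alpha\eta}\bigl(\int_{\eta}^{T}(T-s)a(s)ds+\frac{1}{2}\int_{0}^{\eta}[2(T-\eta)+\alpha(\eta^{2}-s^{2})]a(s)ds\bigr)\geq r_{2}$'' does not follow from the computation as you describe it, and it is not visible from your argument that $M\gamma\geq\Lambda_{2}$ suffices; the same defect affects your treatment of $f_{0}=\infty$ in case (H2).

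The repair is straightforward, in either of two ways. (a) Evaluate at $t=\eta$ instead of $t=T$: then your kernel identity is exactly correct, $\|Au\|\geq Au(\eta)$ is trivial, and $M\geq\Lambda_{2}$ closes the estimate; this is precisely what the paper does. (b) Keep $t=T$ but use the correct kernel $\frac{\alpha}{2(1-\alpha\eta)}[2\eta(T-s)-(\eta-s)^{2}]$ on $[0,\eta]$ and $\frac{\alpha\eta}{1-\alpha\eta}(T-s)$ on $[\eta,T]$; since $f_{\infty}=\infty$ (resp.\ $f_{0}=\infty$) lets you take $M$ as large as you wish, any strictly positive replacement of $\Lambda_{2}$ works (positivity of the corresponding integral of $a$ follows from (B2) and continuity), or alternatively invoke Lemma \ref{lem 2.4} applied to $Au$ to get $Au(T)\geq\gamma\|Au\|\geq\gamma Au(\eta)$, which combined with the $Au(\eta)$ identity shows a posteriori that your choice $M\gamma\geq\Lambda_{2}$ does suffice. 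One further small point: in your (H2) sublinearity step you should take $\epsilon<\Lambda_{1}$ strictly (or handle $\beta=0$ separately), since with $\epsilon=\Lambda_{1}$ the required inequality $\Lambda_{1}^{-1}\beta+\Lambda_{1}^{-1}\epsilon r_{2}\leq r_{2}$ has slope exactly $1$ on the left and fails for every $r_{2}$ when $\beta>0$; your alternative to the paper's bounded/unbounded case split is otherwise fine.
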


\begin{proof}
At first, let {\rm (H1)} hold. Since $f_{0}=0$, then for any $\epsilon\in\left(0,\Lambda_{1}\right]$, there
exists $\rho_{\star}>0$ such that
\begin{equation}\label{eq-29}
f(u)\leq \epsilon{u},\ \text{for}\ u\in(0,\rho_{\star}].
\end{equation}
Let $\Omega_{\rho_{\star}}=\left\{u\in E: \|u\|<\rho_{\star}\right\}$, then from \eqref{eq-25},\eqref{eq-29},
for any $u\in K\cap \partial\Omega_{\rho_{\star}}$, we have
\begin{eqnarray*}
Au(t)&=& \frac{1}{1-\alpha\eta}\int_{0}^{T}(T-s)a(s)f(u(s))ds \\
&&-\frac{\alpha}{2(1-\alpha \eta)}\int_{0}^{\eta }(\eta -s)^{2}a(s)f(u(s))ds \\
&&-\int_{0}^{t}(t-s)a(s)f(u(s))ds
\end{eqnarray*}
\begin{eqnarray*}
&\leq& \frac{1}{1-\alpha\eta}\int_{0}^{T}(T-s)a(s)f(u(s))ds \\
&\leq&\frac{\epsilon\rho_{\star}}{1-\alpha\eta}\int_{0}^{T}(T-s)a(s)ds \\
&=&\frac{\epsilon}{\Lambda_{1}}\rho_{\star} \\
&\leq&\rho_{\star}=\|u\|,
\end{eqnarray*}
which yields
\begin{equation}\label{eq-30}
\|Au\|\leq\|u\|, \ \ \text{for}\ u\in K\cap\partial\Omega_{\rho_{\star}}.
\end{equation}

Further, since $f_{\infty}=\infty$, then for any $M^{\star}\in\left[\Lambda_{2},\infty\right)$, there exists
$\rho^{\star}>\rho_{\star}$ such that
\begin{equation}\label{eq-31}
f(u)\geq M^{\star}u,\ \text{for}\ u\geq \gamma\rho^{\star}.
\end{equation}

Now, set $\Omega_{\rho^{\star}}=\left\{u\in E: \|u\|<\rho^{\star}\right\}$ for $u\in K\cap
\partial\Omega_{\rho^{\star}}$.

Since $u\in K$, $\min_{t\in\left[0,T\right]}u(t)\geq\gamma\|u\|=\gamma\rho^{\star}$. Hence, for any $u\in K\cap
\partial\Omega_{\rho^{\star}}$, from \eqref{eq-25},\eqref{eq-31}, we get
\begin{eqnarray*}
Au(\eta)&=&\frac{1}{1-\alpha\eta}\int_{0}^{T}(T-s)a(s)f(u(s))ds \\
&&-\frac{\alpha}{2(1-\alpha \eta)}\int_{0}^{\eta }(\eta -s)^{2}a(s)f(u(s))ds \\
&&-\int_{0}^{\eta}(\eta-s)a(s)f(u(s))ds\\
&=& \frac{T}{1-\alpha\eta}\int_{0}^{T}a(s)f(u(s))ds -\frac{1}{1-\alpha \eta}\int_{0}^{\eta }sa(s)f(u(s))ds \\
&&- \frac{1}{1-\alpha\eta}\int_{\eta}^{T}sa(s)f(u(s))ds -\frac{\alpha\eta^{2}}{2(1-\alpha \eta)}\int_{0}^{\eta
}a(s)f(u(s))ds  \\
&&+\frac{\alpha\eta}{1-\alpha \eta}\int_{0}^{\eta}sa(s)f(u(s))ds -
\frac{\alpha}{2(1-\alpha\eta)}\int_{0}^{\eta}s^{2}a(s)f(u(s))ds \\
&&-\eta\int_{0}^{\eta}a(s)f(u(s))ds +\int_{0}^{\eta}sa(s)f(u(s))ds\\
&=&\frac{T}{1-\alpha\eta}\int_{0}^{T}a(s)f(u(s))ds -\frac{1}{1-\alpha \eta}\int_{\eta}^{T}sa(s)f(u(s))ds \\
&&-\frac{\alpha\eta^{2}}{2(1-\alpha \eta)}\int_{0}^{\eta }a(s)f(u(s))ds -\frac{\alpha}{2(1-\alpha
\eta)}\int_{0}^{\eta }s^{2}a(s)f(u(s))ds \\
&&-\eta\int_{0}^{\eta }a(s)f(u(s))ds
\end{eqnarray*}
\begin{eqnarray*}
&=&\frac{1}{2(1-\alpha \eta)}\int_{0}^{\eta}[2(T-\eta)+\alpha(\eta^{2}-s^{2})]a(s)f(u(s))ds \\
&&+\frac{1}{1-\alpha\eta}\int_{\eta}^{T}(T-s)a(s)f(u(s))ds \\
&\geq&\frac{\gamma\rho^{\star}M^{\star}}{1-\alpha\eta}\left(\int_{\eta}^{T}(T-s)a(s)ds
+\frac{1}{2}\int_{0}^{\eta}[2(T-\eta)+\alpha(\eta^{2}-s^{2})]a(s)ds\right) \\
&=&\frac{M^{\star}}{\Lambda_{2}}\rho^{\star}\\
&\geq&\rho^{\star}=\|u\|,
\end{eqnarray*}
which implies
\begin{equation}\label{eq-32}
 \|Au\|\geq\|u\|,\ \ \text{for}\  u\in K\cap\partial\Omega_{\rho^{\star}}.
\end{equation}
Therefore, from \eqref{eq-30},\eqref{eq-32} and Theorem \ref{theo 2.1}, it follows that $A$ has a fixed point in
$K\cap (\overline{\Omega}_{\rho^{\star}}\backslash \Omega _{\rho_{\star}})$ such that
$\rho_{\star}\leq\|u\|\leq\rho^{\star}$.

Next, let {\rm (H2)} hold. In view of $f_{0}=\infty$, for any $M_{\star}\in\left[\Lambda_{2},\infty\right)$,
there exists $r_{\star}>0$ such that
\begin{equation}\label{eq-33}
f(u)\geq M_{\star}u,\ \text{for}\ u\in(0,r_{\star}].
\end{equation}

Set $\Omega_{r_{\star}}=\left\{u\in E: \|u\|<r_{\star}\right\}$,
for $u\in K\cap\partial\Omega_{r_{\star}}$. Since $u\in K$, it follows that
$\min_{t\in\left[0,T\right]}u(t)\geq\gamma\|u\|=\gamma r_{\star}$.
Thus, from \eqref{eq-25}, \eqref{eq-33}, for any $u\in
K\cap\partial\Omega_{r_{\star}}$, we can get

\begin{eqnarray*}
Au(\eta)&=&\frac{1}{1-\alpha\eta}\int_{0}^{T}(T-s)a(s)f(u(s))ds \\
&&-\frac{\alpha}{2(1-\alpha \eta)}\int_{0}^{\eta }(\eta -s)^{2}a(s)f(u(s))ds \\
&&-\int_{0}^{\eta}(\eta-s)a(s)f(u(s))ds \\
&=&\frac{1}{2(1-\alpha \eta)}\int_{0}^{\eta}[2(T-\eta)+\alpha(\eta^{2}-s^{2})]a(s)f(u(s))ds \\
&&+\frac{1}{1-\alpha\eta}\int_{\eta}^{T}(T-s)a(s)f(u(s))ds \\
&\geq&\frac{\gamma r_{\star}M_{\star}}{1-\alpha\eta}\left(\int_{\eta}^{T}(T-s)a(s)ds
+\frac{1}{2}\int_{0}^{\eta}[2(T-\eta)+\alpha(\eta^{2}-s^{2})]a(s)ds\right) \\
&=&\frac{M_{\star}}{\Lambda_{2}}r_{\star}\\
&\geq& r_{\star}=\|u\|,
\end{eqnarray*}
which implies

\begin{equation}\label{eq-34}
\|Au\|\geq \|u\|,\ \ \text{for}\ u\in K\cap\partial\Omega_{r_{\star}}.
\end{equation}
Again, since $f_{\infty}=0$, then for any $\epsilon_{1}\in\left(0,\Lambda_{1}\right)$, there exists
$r_{0}>r_{\star}$ such that
\begin{equation}\label{eq-35}
f(u)\leq\epsilon_{1}u,\ \text{for}\ u\in\left[r_{0},\infty\right).
\end{equation}

We consider two cases:

Case (i). Suppose that $f(u)$ is unbounded, then from $f\in C([0,\infty),[0,\infty))$, we know that there is
$r^{\star}>r_{0}$ such  that
\begin{equation}\label{eq-36}
f(u)\leq f(r^{\star}),\ \text{for}\ u\in\left[0,r^{\star}\right].
\end{equation}
Since $r^{\star}>r_{0}$, then from \eqref{eq-35} and \eqref{eq-36}, one has
\begin{equation}\label{eq-37}
f(u)\leq f(r^{\star})\leq\epsilon_{1}r^{\star},\ \text{for}\ u\in\left[0,r^{\star}\right].
\end{equation}
For $u\in K$ and $\|u\|=r^{\star}$, from \eqref{eq-25} and \eqref{eq-37}, we obtain

\begin{eqnarray*}
Au(t)&\leq& \frac{1}{1-\alpha\eta}\int_{0}^{T}(T-s)a(s)f(u(s))ds \\
&\leq&\frac{\epsilon_{1}r^{\star}}{1-\alpha\eta}\int_{0}^{T}(T-s)a(s)ds \\
&=&\frac{\epsilon_{1}}{\Lambda_{1}}r^{\star} \\
&\leq&r^{\star}=\|u\|.
\end{eqnarray*}

Case (ii). Suppose that $f(u)$ is bounded, say $f(u)\leq L$ for all $u\in [0,\infty)$. Taking
$r^{\star}\geq\max\left\{\frac{L}{\epsilon_{1}},r_{\star}\right\}$.

For $u\in K$ with $\|u\|=r^{\star}$, from \eqref{eq-25}, one has
\begin{eqnarray*}
Au(t)&\leq& \frac{1}{1-\alpha\eta}\int_{0}^{T}(T-s)a(s)f(u(s))ds \\
&\leq&\frac{L}{1-\alpha\eta}\int_{0}^{T}(T-s)a(s)ds \\
&\leq&\frac{\epsilon_{1}r^{\star}}{1-\alpha\eta}\int_{0}^{T}(T-s)a(s)ds \\
&=&\frac{\epsilon_{1}}{\Lambda_{1}}r^{\star} \\
&\leq&r^{\star}=\|u\|.
\end{eqnarray*}
Hence, in either case, we always may set $\Omega_{r^{\star}}=\left\{u\in E: \|u\|<r^{\star}\right\}$ such that
\begin{equation}\label{eq-38}
\|Au\|\leq \|u\|,\ \ \text{for}\ u\in K\cap\partial\Omega_{r^{\star}}.
\end{equation}
Hence, from \eqref{eq-34}, \eqref{eq-38} and Theorem \ref{theo 2.1}, it follows that $A$ has a fixed point $u$
in
$K\cap (\overline{\Omega}_{r^{\star}}\backslash \Omega _{r_{\star}})$ such that $r_{\star}\leq\|u\|\leq
r^{\star}$.

The proof is therefore complete.
\end{proof}

\section{The existence results of the BVP \eqref{eq-2},\eqref{eq-3} for the case: $f_{0}=f_{\infty}=\infty$ or
$f_{0}=f_{\infty}=0$}

\begin{theorem}\label{theo 4.1}
Assume that the following assumptions are satisfied.
\begin{itemize}
\item[(H3)]
$f_{0}=f_{\infty}=\infty$.
\item[(H4)] There exist constants $\rho_{1}> 0$ and $M_{1}\in(0,\Lambda_{1}]$ such that
$f(u)\leq M_{1}\rho_{1}$, for $u\in[0,\rho_{1}]$ .
\end{itemize}
Then, the BVP \eqref{eq-2},\eqref{eq-3} has at least two positive solutions $u_{1}$ and $u_{2}$
such that
\begin{equation*}\label{eq-39}
0<\|u_{1}\|<\rho_{1}<\|u_{2}\|.
\end{equation*}
\end{theorem}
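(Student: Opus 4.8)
The plan is to invoke Krasnoselskii's theorem (Theorem \ref{theo 2.1}) twice, on two adjacent annuli that share the sphere $\|u\|=\rho_{1}$, using exactly the machinery already developed for Theorem \ref{theo 3.1}. Fix once and for all some $M\in[\Lambda_{2},\infty)$. Since $f_{0}=\infty$, there is $r_{\star}\in(0,\rho_{1})$ with $f(u)\geq Mu$ for $u\in(0,r_{\star}]$. Put $\Omega_{r_{\star}}=\{u\in E:\|u\|<r_{\star}\}$; for $u\in K\cap\partial\Omega_{r_{\star}}$ we have $\min_{t\in[0,T]}u(t)\geq\gamma r_{\star}$, so repeating verbatim the lower estimate of $Au(\eta)$ from the proof of Theorem \ref{theo 3.1} (rewrite \eqref{eq-25} at $t=\eta$ and regroup) gives $\|Au\|\geq Au(\eta)\geq \frac{M}{\Lambda_{2}}r_{\star}\geq r_{\star}=\|u\|$ on $K\cap\partial\Omega_{r_{\star}}$.

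Next, since $f_{\infty}=\infty$, there is $N>0$ with $f(u)\geq Mu$ for all $u\geq N$. Choose $R^{\star}>\max\{\rho_{1},\,N/\gamma\}$ and set $\Omega_{R^{\star}}=\{u\in E:\|u\|<R^{\star}\}$. For $u\in K\cap\partial\Omega_{R^{\star}}$ one has $u(s)\geq\gamma R^{\star}\geq N$ for every $s\in[0,T]$, so the same regrouped estimate for $Au(\eta)$ yields $\|Au\|\geq Au(\eta)\geq\frac{M}{\Lambda_{2}}R^{\star}\geq R^{\star}=\|u\|$ on $K\cap\partial\Omega_{R^{\star}}$. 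Thus $A$ is norm-expanding on both the inner sphere $\partial\Omega_{r_{\star}}$ and the outer sphere $\partial\Omega_{R^{\star}}$.

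For the middle sphere, let $\Omega_{\rho_{1}}=\{u\in E:\|u\|<\rho_{1}\}$. If $u\in K\cap\partial\Omega_{\rho_{1}}$ then $0\leq u(s)\leq\rho_{1}$ on $[0,T]$, so (H4) gives $f(u(s))\leq M_{1}\rho_{1}$; discarding the two nonpositive terms in \eqref{eq-25} we obtain $\|Au\|\leq\frac{1}{1-\alpha\eta}\int_{0}^{T}(T-s)a(s)f(u(s))\,ds\leq\frac{M_{1}\rho_{1}}{1-\alpha\eta}\int_{0}^{T}(T-s)a(s)\,ds=\frac{M_{1}}{\Lambda_{1}}\rho_{1}\leq\rho_{1}=\|u\|$. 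Since $0\in\Omega_{r_{\star}}$, $\overline{\Omega}_{r_{\star}}\subset\Omega_{\rho_{1}}$, $\overline{\Omega}_{\rho_{1}}\subset\Omega_{R^{\star}}$, and $A:K\to K$ is completely continuous, Theorem \ref{theo 2.1}(i) on $K\cap(\overline{\Omega}_{\rho_{1}}\setminus\Omega_{r_{\star}})$ gives a fixed point $u_{1}$ with $r_{\star}\leq\|u_{1}\|\leq\rho_{1}$, while Theorem \ref{theo 2.1}(ii) on $K\cap(\overline{\Omega}_{R^{\star}}\setminus\Omega_{\rho_{1}})$ gives a fixed point $u_{2}$ with $\rho_{1}\leq\|u_{2}\|\leq R^{\star}$. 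Both lie in $K$ with positive norm, hence are positive solutions of the BVP, and in particular $u_{1}\neq u_{2}$.

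There is no genuine obstacle here: every ingredient — the regrouped lower bound for $Au(\eta)$, the upper bound via $\Lambda_{1}$, and the cone geometry — has already been established in Theorem \ref{theo 3.1} and Theorem \ref{theo 2.1}, so the real content is just the bookkeeping that produces three radii $r_{\star}<\rho_{1}<R^{\star}$. The only mildly delicate point is upgrading the conclusion to the strict chain $0<\|u_{1}\|<\rho_{1}<\|u_{2}\|$, i.e.\ excluding a fixed point on $K\cap\partial\Omega_{\rho_{1}}$: when $M_{1}<\Lambda_{1}$ the displayed estimate forces $\|Au\|<\|u\|$ there, so no such fixed point exists; the borderline case $M_{1}=\Lambda_{1}$ is disposed of by a short separate argument (equality throughout would force $f(u(s))$ both to equal $M_{1}\rho_{1}$ on the support of $(T-s)a(s)$ and to vanish on $[0,\eta)$, which is impossible since $M_{1}\rho_{1}>0$ and $a(t_{0})>0$ for some $t_{0}$), or else by replacing $\rho_{1}$ with a slightly smaller value in the middle estimate.
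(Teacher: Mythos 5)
Your proposal is correct and takes essentially the same route as the paper: expansion of $A$ on a small sphere (from $f_{0}=\infty$) and on a large sphere (from $f_{\infty}=\infty$) via the regrouped estimate of $Au(\eta)$ against $\Lambda_{2}$, compression on the middle sphere $\|u\|=\rho_{1}$ via (H4) and $\Lambda_{1}$, and two applications of Theorem \ref{theo 2.1} on the adjacent annuli. The only caveat is your parenthetical treatment of the borderline case $M_{1}=\Lambda_{1}$: the claimed contradiction (that $f(u(s))$ must equal $M_{1}\rho_{1}$ on the support of $(T-s)a(s)$ yet vanish on $[0,\eta)$) fails if $a\equiv 0$ on $[0,\eta)$, so that step is not airtight — though the paper itself simply asserts the strict inequalities $0<\|u_{1}\|<\rho_{1}<\|u_{2}\|$ without addressing the possibility of a fixed point on $\partial\Omega_{\rho_{1}}$ at all.
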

\begin{proof}
At first, in view of $f_{0}=\infty$, then for any $M_{\star}\in\left[\Lambda_{2},\infty\right)$, there exists
$\rho_{\star}\in\left(0,\rho_{1}\right)$ such that
\begin{equation}\label{eq-40}
f(u)\geq M_{\star}u,\ \  0<u\leq \rho_{\star}.
\end{equation}

Set $\Omega_{\rho_{\star}}=\left\{u\in E: \|u\|<\rho_{\star}\right\}$ for $u\in K\cap
\partial\Omega_{\rho_{\star}}$.

Since $u\in K$, $\min_{t\in\left[0,T\right]}u(t)\geq\gamma\|u\|=\gamma\rho_{\star}$. Hence, for any $u\in K\cap
\partial\Omega_{\rho_{\star}}$, from \eqref{eq-25},\eqref{eq-40}, we have

\begin{eqnarray*}
Au(\eta)&=&\frac{1}{1-\alpha\eta}\int_{0}^{T}(T-s)a(s)f(u(s))ds \\
&&-\frac{\alpha}{2(1-\alpha \eta)}\int_{0}^{\eta }(\eta -s)^{2}a(s)f(u(s))ds \\
&&-\int_{0}^{\eta}(\eta-s)a(s)f(u(s))ds \\
&\geq&\frac{\gamma \rho_{\star}M_{\star}}{1-\alpha\eta}\left(\int_{\eta}^{T}(T-s)a(s)ds
+\frac{1}{2}\int_{0}^{\eta}[2(T-\eta)+\alpha(\eta^{2}-s^{2})]a(s)ds\right) \\
&=&\rho_{\star}M_{\star}\Lambda_{2}^{-1}\\
&\geq&\rho_{\star}=\|u\|,
\end{eqnarray*}
which implies

\begin{equation}\label{eq-41}
\|Au\|\geq \|u\|,\ \ \text{for}\ u\in K\cap\partial\Omega_{\rho_{\star}}.
\end{equation}

Next, since $f_{\infty}=\infty$, then for any $M^{\star}\in\left[\Lambda_{2},\infty\right)$, there exists
$\rho^{\star}>\rho_{1}$ such that
\begin{equation}\label{eq-42}
f(u)\geq M^{\star}u,\ \ \text{for}\ u\geq \gamma\rho^{\star}.
\end{equation}

Set $\Omega_{\rho^{\star}}=\left\{u\in E: \|u\|<\rho^{\star}\right\}$ for $u\in K\cap
\partial\Omega_{\rho^{\star}}$.

Since $u\in K$, $\min_{t\in\left[0,T\right]}u(t)\geq\gamma\|u\|=\gamma\rho^{\star}$. Hence, for any $u\in K\cap
\partial\Omega_{\rho^{\star}}$, from \eqref{eq-25},\eqref{eq-42}, we can get

\begin{eqnarray*}
Au(\eta)&=&\frac{1}{1-\alpha\eta}\int_{0}^{T}(T-s)a(s)f(u(s))ds \\
&&-\frac{\alpha}{2(1-\alpha \eta)}\int_{0}^{\eta }(\eta -s)^{2}a(s)f(u(s))ds \\
&&-\int_{0}^{\eta}(\eta-s)a(s)f(u(s))ds \\
&\geq&\frac{\gamma \rho^{\star}M^{\star}}{1-\alpha\eta}\left(\int_{\eta}^{T}(T-s)a(s)ds
+\frac{1}{2}\int_{0}^{\eta}[2(T-\eta)+\alpha(\eta^{2}-s^{2})]a(s)ds\right) \\
&=&\rho^{\star}M^{\star}\Lambda_{2}^{-1}\\
&\geq&\rho^{\star}=\|u\|,
\end{eqnarray*}
which implies
\begin{equation}\label{eq-43}
 \|Au\|\geq\|u\|,\ \ \text{for}\  u\in K\cap\partial\Omega_{\rho^{\star}}.
\end{equation}

Finally, let $\Omega_{\rho_{1}}=\left\{u\in E: \|u\|<\rho_{1}\right\}$. By {\rm (H4)}, for any $u\in
K\cap\partial\Omega_{\rho_{1}}$, we have
\begin{eqnarray*}
Au(t)&\leq& \frac{1}{1-\alpha\eta}\int_{0}^{T}(T-s)a(s)f(u(s))ds \\
&\leq&\frac{M_{1}\rho_{1}}{1-\alpha\eta}\int_{0}^{T}(T-s)a(s)ds \\
&=&\rho_{1}M_{1}\Lambda_{1}^{-1}\\
&\leq&\rho_{1}=\|u\|.
\end{eqnarray*}

Which yields
\begin{equation}\label{eq-44}
\|Au\|\leq\|u\|,\ \ \text{for}\ u\in K\cap\partial\Omega_{\rho_{1}}.
\end{equation}

Hence, since $\rho_{\star}<\rho_{1}<\rho^{\star}$ and from \eqref{eq-41}, \eqref{eq-43}, \eqref{eq-44}, it
follows from Theorem \ref{theo 2.1} that $A$ has a fixed point $u_{1}$ in
$K\cap (\overline{\Omega}_{\rho_{1}}\backslash \Omega _{\rho_{\star}})$ and a fixed point $u_{2}$ in $K\cap
(\overline{\Omega}_{\rho^{\star}}\backslash \Omega _{\rho_{1}})$. Both are positive solutions of the BVP
\eqref{eq-2},\eqref{eq-3} and $0<\|u_{1}\|<\rho_{1}<\|u_{2}\|$.

The proof is therefore complete.
\end{proof}

\begin{theorem}\label{theo 4.2}
Assume that the following assumptions are satisfied.
\begin{itemize}
\item[(H5)]
 $f_{0}=f_{\infty}=0$.
\item[(H6)] There exist constants $\rho_{2}> 0$ and $M_{2}\in\left[\Lambda_{2},\infty\right)$  such that
$f(u)\geq M_{2}\rho_{2}$, for $u\in[\gamma\rho_{2} ,\rho_{2}]$ .
\end{itemize}
Then, the BVP \eqref{eq-2},\eqref{eq-3} has at least two positive solutions $u_{1}$ and $u_{2}$
such that
\begin{equation*}\label{eq-45}
0<\|u_{1}\|<\rho_{2}<\|u_{2}\|.
\end{equation*}
\end{theorem}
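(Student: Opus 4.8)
The plan is to mirror the structure of the proof of Theorem~\ref{theo 4.1}, but with the roles of ``lower bound on $\|Au\|$'' and ``upper bound on $\|Au\|$'' exchanged. We have three radii $r_\star < \rho_2 < r^\star$: on the inner sphere $\|u\|=r_\star$ and the outer sphere $\|u\|=r^\star$ we will force $\|Au\|\le\|u\|$ using the hypotheses $f_0=0$ and $f_\infty=0$ respectively, while on the intermediate sphere $\|u\|=\rho_2$ we will force $\|Au\|\ge\|u\|$ using {\rm (H6)}. Then two applications of Theorem~\ref{theo 2.1} (case (i) on the annulus between $r_\star$ and $\rho_2$, and case (ii) on the annulus between $\rho_2$ and $r^\star$) yield fixed points $u_1$ and $u_2$ with $r_\star\le\|u_1\|\le\rho_2\le\|u_2\|\le r^\star$; strict inequalities at $\rho_2$ follow because no $u\in K\cap\partial\Omega_{\rho_2}$ can simultaneously satisfy $\|Au\|\le\|u\|$ and $\|Au\|\ge\|u\|$ in the relevant annuli unless it already lies on $\partial\Omega_{\rho_2}$, and a point on that boundary cannot be a solution since it would contradict the strict inequality $\rho_2 M_2\Lambda_2^{-1}>\rho_2$ when $M_2>\Lambda_2$ — more carefully, one argues as in Theorem~\ref{theo 4.1} that the fixed points lie in the open-closed annuli and hence $0<\|u_1\|<\rho_2<\|u_2\|$.

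For the inner estimate: since $f_0=0$, for any $\epsilon\in(0,\Lambda_1]$ there is $r_\star\in(0,\rho_2)$ with $f(u)\le\epsilon u$ for $u\in(0,r_\star]$. Then for $u\in K\cap\partial\Omega_{r_\star}$, using \eqref{eq-25} and dropping the two nonpositive terms,
\begin{equation*}
Au(t)\le\frac{1}{1-\alpha\eta}\int_{0}^{T}(T-s)a(s)f(u(s))\,ds\le\frac{\epsilon r_\star}{1-\alpha\eta}\int_{0}^{T}(T-s)a(s)\,ds=\frac{\epsilon}{\Lambda_1}r_\star\le r_\star=\|u\|,
\end{equation*}
so $\|Au\|\le\|u\|$ on $K\cap\partial\Omega_{r_\star}$. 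This is exactly the computation already done for \eqref{eq-30}.

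For the outer estimate we repeat, essentially verbatim, the $f_\infty=0$ argument from the proof of Theorem~\ref{theo 3.1} (the part producing \eqref{eq-38}): pick $\epsilon_1\in(0,\Lambda_1)$ and $r_0>r_\star$ with $f(u)\le\epsilon_1 u$ for $u\ge r_0$, then split into the case where $f$ is unbounded (choose $r^\star>r_0$ with $f(u)\le f(r^\star)\le\epsilon_1 r^\star$ on $[0,r^\star]$) and the case where $f$ is bounded by $L$ (choose $r^\star\ge\max\{L/\epsilon_1,\rho_2\}$); in either case the same one-line estimate as above gives $\|Au\|\le\|u\|$ for $u\in K\cap\partial\Omega_{r^\star}$, with $r^\star>\rho_2$. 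For the middle estimate, set $\Omega_{\rho_2}=\{u\in E:\|u\|<\rho_2\}$; for $u\in K\cap\partial\Omega_{\rho_2}$ we have $\gamma\rho_2\le\min_{t}u(t)\le u(s)\le\rho_2$ for all $s$, so {\rm (H6)} gives $f(u(s))\ge M_2\rho_2$, and then the computation of $Au(\eta)$ from the proof of Theorem~\ref{theo 4.1} (the identity rewriting the three integrals as the single expression with kernel $2(T-\eta)+\alpha(\eta^2-s^2)$ on $[0,\eta]$ plus $(T-s)$ on $[\eta,T]$) yields
\begin{equation*}
Au(\eta)\ge\frac{M_2\rho_2}{1-\alpha\eta}\left(\int_{\eta}^{T}(T-s)a(s)\,ds+\frac{1}{2}\int_{0}^{\eta}[2(T-\eta)+\alpha(\eta^{2}-s^{2})]a(s)\,ds\right)=\rho_2 M_2\Lambda_2^{-1}\ge\rho_2=\|u\|,
\end{equation*}
hence $\|Au\|\ge\|u\|$ on $K\cap\partial\Omega_{\rho_2}$. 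The main (and only real) obstacle is purely bookkeeping: ensuring the three radii are correctly ordered $r_\star<\rho_2<r^\star$ — in particular that $r_0$, and hence $r^\star$, can be taken larger than $\rho_2$, which is automatic since $f_\infty=0$ only constrains large $u$ — and that the annuli $\overline\Omega_{\rho_2}\setminus\Omega_{r_\star}$ and $\overline\Omega_{r^\star}\setminus\Omega_{\rho_2}$ are the ones on which Theorem~\ref{theo 2.1} is applied, giving the strict inequalities $0<\|u_1\|<\rho_2<\|u_2\|$. No new estimate is needed beyond those already established in the proofs of Theorems~\ref{theo 3.1} and~\ref{theo 4.1}.
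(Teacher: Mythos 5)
Your proposal is correct and follows essentially the same route as the paper's own proof: the $f_{0}=0$ estimate on a small sphere $\rho_{\star}<\rho_{2}$, the bounded/unbounded case split from $f_{\infty}=0$ on a large sphere $\rho^{\star}>\rho_{2}$, the (H6) lower bound for $Au(\eta)$ on $\partial\Omega_{\rho_{2}}$, and two applications of Theorem~\ref{theo 2.1}. One cosmetic slip: in your middle estimate the displayed ``$=$'' should be ``$\geq$'', since $\Lambda_{2}^{-1}=\gamma\bigl(\int_{\eta}^{T}(T-s)a(s)ds+\frac{1}{2}\int_{0}^{\eta}[2(T-\eta)+\alpha(\eta^{2}-s^{2})]a(s)ds\bigr)/(1-\alpha\eta)$ carries the factor $\gamma$ that your left-hand side omits; the chain $Au(\eta)\geq \rho_{2}M_{2}\Lambda_{2}^{-1}\geq\rho_{2}$ still closes, exactly as in the paper.
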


\begin{proof}
Firstly, since $f_{0}=0$, for any $\epsilon\in\left(0,\Lambda_{1}\right]$, there exists
$\rho_{\star}\in\left(0,\rho_{2}\right)$ such that
\begin{equation}\label{eq-46}
f(u)\leq\epsilon u ,\ \text{for} \ u\in(0,\rho_{\star}].
\end{equation}
Let $\Omega_{\rho_{\star}}=\left\{u\in E: \|u\|<\rho_{\star}\right\}$ for any $u\in K\cap
\partial\Omega_{\rho_{\star}}$.
Then, from \eqref{eq-25},\eqref{eq-46}, we obtain
\begin{eqnarray*}
Au(t)&\leq& \frac{1}{1-\alpha\eta}\int_{0}^{T}(T-s)a(s)f(u(s))ds \\
&\leq&\frac{\epsilon\rho_{\star}}{1-\alpha\eta}\int_{0}^{T}(T-s)a(s)ds \\
&=&\epsilon\Lambda_{1}^{-1}\rho_{\star}\\
&\leq&\rho_{\star}=\|u\|.
\end{eqnarray*}

Which yields
\begin{equation}\label{eq-47}
\|Au\|\leq\|u\|,\ \ \text{for}\ u\in K\cap\partial\Omega_{\rho_{\star}}.
\end{equation}
Secondly, in view of $f_{\infty}=0$, for any $\epsilon_{1}\in\left(0,\Lambda_{1}\right]$, there exists
$\rho_{0}>\rho_{2}$ such that
\begin{equation}\label{eq-48}
f(u)\leq\epsilon_{1} u , \ \ \text{for}\  u\in\left[\rho_{0},\infty\right).
\end{equation}

We consider two cases:

Case (i). Suppose that $f(u)$ is unbounded. Then from $f\in C([0,\infty),[0,\infty))$, we know that there
is $\rho^{\star}>\rho_{0}$ such that
\begin{equation}\label{eq-49}
f(u)\leq f(\rho^{\star}),   \ \ \text{for}\  u\in\left[0,\rho^{\star}\right].
\end{equation}
Since $\rho^{\star}>\rho_{0}$, then from \eqref{eq-48}, \eqref{eq-49}, one has
\begin{equation}\label{eq-50}
f(u)\leq f(\rho^{\star})\leq\epsilon_{1}\rho^{\star},   \ \ \text{for} \  u\in\left[0,\rho^{\star}\right].
\end{equation}
For $u\in K$ and $ \|u\|=\rho^{\star}$ , from \eqref{eq-25}, \eqref{eq-50}, we obtain
\begin{eqnarray*}
Au(t)&\leq& \frac{1}{1-\alpha\eta}\int_{0}^{T}(T-s)a(s)f(u(s))ds \\
&\leq&\frac{\epsilon_{1}\rho^{\star}}{1-\alpha\eta}\int_{0}^{T}(T-s)a(s)ds \\
&=&\epsilon_{1}\Lambda_{1}^{-1}\rho^{\star}\\
&\leq&\rho^{\star}=\|u\|.
\end{eqnarray*}

Case (ii). Suppose that $f(u)$ is bounded, say $f(u)\leq L$ for all $u\in \left[0,\infty\right)$. Taking
$\rho^{\star}\geq \max\left\{\frac{L}{\epsilon_{1}}, \rho_{0}\right\}$. For $u\in K$ with $ \|u\|=\rho^{\star}$,
from \eqref{eq-25}, we have
\begin{eqnarray*}
Au(t)&\leq& \frac{1}{1-\alpha\eta}\int_{0}^{T}(T-s)a(s)f(u(s))ds \\
&\leq&\frac{L}{1-\alpha\eta}\int_{0}^{T}(T-s)a(s)ds \\
&\leq&\frac{\epsilon_{1}\rho^{\star}}{1-\alpha\eta}\int_{0}^{T}(T-s)a(s)ds \\
&=&\epsilon_{1}\Lambda_{1}^{-1}\rho^{\star}\\
&\leq&\rho^{\star}=\|u\|.
\end{eqnarray*}
Hence, in either case, we always may set $\Omega_{\rho^{\star}}=\left\{u\in E: \|u\|<\rho^{\star}\right\}$ such
that
\begin{equation}\label{eq-51}
\|Au\|\leq\|u\|,\ \ \text{for}\ u\in K\cap\partial\Omega_{\rho^{\star}}.
\end{equation}
Finally, set $\Omega_{\rho_{2}}=\left\{u\in E: \|u\|<\rho_{2}\right\}$ for $u\in K\cap
\partial\Omega_{\rho_{2}}$.
Since $u\in K$, $\min_{t\in [0,T]}u(t)\geq \gamma \|u\|=\gamma \rho_{2}$. Hence, for any $u\in
K\cap\partial\Omega_{\rho_{2}}$, from \eqref{eq-25} and {\rm (H6)}, we can get
\begin{eqnarray*}
Au(\eta)&=&\frac{1}{1-\alpha\eta}\int_{0}^{T}(T-s)a(s)f(u(s))ds \\
&&-\frac{\alpha}{2(1-\alpha \eta)}\int_{0}^{\eta }(\eta -s)^{2}a(s)f(u(s))ds \\
&&-\int_{0}^{\eta}(\eta-s)a(s)f(u(s))ds \\
&\geq&\frac{\gamma \rho_{2}M_{2}}{1-\alpha\eta}\left(\int_{\eta}^{T}(T-s)a(s)ds
+\frac{1}{2}\int_{0}^{\eta}[2(T-\eta)+\alpha(\eta^{2}-s^{2})]a(s)ds\right) \\
&=&\rho_{2}M_{2}\Lambda_{2}^{-1}\\
&\geq&\rho_{2}=\|u\|,
\end{eqnarray*}
which implies
\begin{equation}\label{eq-53}
\|Au\|\geq\|u\|,\ \ \text{for}\ u\in K\cap\partial\Omega_{\rho_{2}}.
\end{equation}
Hence, since $\rho_{\star}<\rho_{2}<\rho^{\star}$ and from \eqref{eq-47}, \eqref{eq-51} and \eqref{eq-53}, it
follows from
Theorem \ref{theo 2.1} that $A$ has a fixed point $u_{1}$ in $K\cap (\overline{\Omega}_{\rho_{2}}\backslash
\Omega _{\rho_{\star}})$ and a fixed point $u_{2}$ in $K\cap (\overline{\Omega}_{\rho^{\star}}\backslash \Omega
_{\rho_{2}})$. Both are positive solutions of the BVP \eqref{eq-2},\eqref{eq-3} and
$0<\|u_{1}\|<\rho_{2}<\|u_{2}\|$.
The proof is therefore complete.
\end{proof}

\section{The existence results of the BVP \eqref{eq-2},\eqref{eq-3} for the case: $f_{0}, f_{\infty}\not\in
\left\{0,\infty\right\}$}
In this section, we discuss the existence for the positive solution of the BVP \eqref{eq-2},\eqref{eq-3}
assuming $f_{0}, f_{\infty}\not\in \left\{0,\infty\right\}$.

Now, we shall state and prove the following main result.
\begin{theorem}\label{theo 5.1}
Suppose {\rm (H4)} and {\rm (H6)} hold and that $\rho_{1}\neq \rho_{2}$. Then, the BVP \eqref{eq-2},\eqref{eq-3}
has
at least one positive solution $u$ satisfying $\rho_{1}<\|u\|<\rho_{2}$ or $\rho_{2}<\|u\|<\rho_{1}$.
\end{theorem}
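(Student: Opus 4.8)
The plan is to apply Krasnoselskii's theorem (Theorem~\ref{theo 2.1}) one more time, now with the two concentric open balls $\Omega_{\rho_1}=\{u\in E:\|u\|<\rho_1\}$ and $\Omega_{\rho_2}=\{u\in E:\|u\|<\rho_2\}$. In fact all the work has already been done inside the proofs of Theorems~\ref{theo 4.1} and~\ref{theo 4.2}: there, (H4) was invoked only on the sphere $\partial\Omega_{\rho_1}$ and (H6) only on the sphere $\partial\Omega_{\rho_2}$, so here one merely copies those two estimates and assembles them.

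Precisely: by (H4), for $u\in K\cap\partial\Omega_{\rho_1}$ we have $0\le u(s)\le\|u\|=\rho_1$ on $[0,T]$, hence $f(u(s))\le M_1\rho_1$, and as in the last display of the proof of Theorem~\ref{theo 4.1},
\[
Au(t)\le\frac{1}{1-\alpha\eta}\int_{0}^{T}(T-s)a(s)f(u(s))\,ds\le\frac{M_1\rho_1}{1-\alpha\eta}\int_{0}^{T}(T-s)a(s)\,ds=\rho_1 M_1\Lambda_1^{-1}\le\rho_1 ,
\]
so $\|Au\|\le\|u\|$ on $K\cap\partial\Omega_{\rho_1}$. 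By (H6), for $u\in K\cap\partial\Omega_{\rho_2}$ the cone condition gives $\gamma\rho_2=\gamma\|u\|\le u(s)\le\|u\|=\rho_2$ for every $s$, hence $f(u(s))\ge M_2\rho_2$, and evaluating $Au$ at $t=\eta$ exactly as in the proof of Theorem~\ref{theo 4.2} yields
\[
\|Au\|\ge Au(\eta)\ge\frac{\gamma\rho_2 M_2}{1-\alpha\eta}\Bigl(\int_{\eta}^{T}(T-s)a(s)\,ds+\tfrac12\int_{0}^{\eta}[2(T-\eta)+\alpha(\eta^{2}-s^{2})]a(s)\,ds\Bigr)=\rho_2 M_2\Lambda_2^{-1}\ge\rho_2 ,
\]
so $\|Au\|\ge\|u\|$ on $K\cap\partial\Omega_{\rho_2}$.

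Next I would split on the ordering of $\rho_1$ and $\rho_2$, which is legitimate precisely because $\rho_1\ne\rho_2$ forces the strict inclusion $\overline{\Omega}_1\subset\Omega_2$ needed in Theorem~\ref{theo 2.1}. If $\rho_1<\rho_2$, the two displayed inequalities are exactly alternative~(i) with $\Omega_1=\Omega_{\rho_1}$, $\Omega_2=\Omega_{\rho_2}$; if $\rho_2<\rho_1$, they are alternative~(ii) with $\Omega_1=\Omega_{\rho_2}$, $\Omega_2=\Omega_{\rho_1}$. Either way $A$ has a fixed point $u\in K\cap(\overline{\Omega}_2\setminus\Omega_1)$, which by Lemma~\ref{lem 2.2} is nonnegative, is nontrivial since $AK\subset K$ together with (B2) excludes $u\equiv 0$, and is therefore a positive solution of~\eqref{eq-2},\eqref{eq-3} whose norm lies between $\rho_1$ and $\rho_2$.

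The step I expect to require the most care is upgrading this to the \emph{strict} inequalities $\rho_1<\|u\|<\rho_2$ (or $\rho_2<\|u\|<\rho_1$), since Theorem~\ref{theo 2.1} by itself only places $\|u\|$ in the closed interval between $\rho_1$ and $\rho_2$. When $M_1<\Lambda_1$ (resp.\ $M_2>\Lambda_2$) the estimates above are strict, so a fixed point cannot sit on $\partial\Omega_{\rho_1}$ (resp.\ $\partial\Omega_{\rho_2}$) and the conclusion is immediate; in the borderline case $M_1=\Lambda_1$ (resp.\ $M_2=\Lambda_2$) one must trace the equality through the chain of inequalities and use the concavity of $Au$ (whose maximum is attained at $t=0$) together with the positivity in (B2) to exclude the endpoints. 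This endpoint bookkeeping is the only part of the argument not obtained by a direct citation of Sections~3 and~4.
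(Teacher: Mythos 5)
Your proposal follows essentially the same route as the paper: use (H4) on $K\cap\partial\Omega_{\rho_1}$ to get $\|Au\|\le\|u\|$, use (H6) together with the cone condition and the evaluation $Au(\eta)$ to get $\|Au\|\ge\|u\|$ on $K\cap\partial\Omega_{\rho_2}$, and then apply Theorem~\ref{theo 2.1} to the annulus determined by whichever of $\rho_1,\rho_2$ is smaller (the paper simply assumes $\rho_1<\rho_2$ without loss of generality). Your closing remark about upgrading $\rho_1\le\|u\|\le\rho_2$ to strict inequalities is a fair point that the paper itself passes over in silence, so it is an honest refinement rather than a deviation.
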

\begin{proof}
Without loss of generality, we may assume that $\rho_{1}<\rho_{2}$.

Let $\Omega_{\rho_{1}}=\left\{u\in E: \|u\|<\rho_{1}\right\}$. By {\rm (H4)}, for any $u\in
K\cap\partial\Omega_{\rho_{1}}$, we obtain
\begin{eqnarray*}
Au(t)&\leq& \frac{1}{1-\alpha\eta}\int_{0}^{T}(T-s)a(s)f(u(s))ds \\
&\leq&\frac{M_{1}\rho_{1}}{1-\alpha\eta}\int_{0}^{T}(T-s)a(s)ds \\
&=&\rho_{1}M_{1}\Lambda_{1}^{-1}\\
&\leq&\rho_{1}=\|u\|.
\end{eqnarray*}
which yields
\begin{equation}\label{eq-54}
\|Au\|\leq\|u\| , \ u\in K\cap\partial\Omega_{\rho_{1}}.
\end{equation}

Now, set $\Omega_{\rho_{2}}=\left\{u\in E: \|u\|<\rho_{2}\right\}$, for  $u\in K\cap\partial\Omega_{\rho_{2}}$.
Since $u\in K$, $\min_{t\in
[\eta,T]}u(t)\geq \gamma \|u\|=\gamma \rho_{2}$.

Hence, for any $u\in K\cap\partial\Omega_{\rho_{2}}$, from \eqref{eq-25} and  {\rm (H6)}, we can get

\begin{eqnarray*}
Au(\eta)&=&\frac{1}{2(1-\alpha \eta)}\int_{0}^{\eta}[2(T-\eta)+\alpha(\eta^{2}-s^{2})]a(s)f(u(s))ds \\
&&+\frac{1}{1-\alpha\eta}\int_{\eta}^{T}(T-s)a(s)f(u(s))ds \\
&\geq&\frac{\gamma \rho_{2}M_{2}}{1-\alpha\eta}\left(\int_{\eta}^{T}(T-s)a(s)ds
+\frac{1}{2}\int_{0}^{\eta}[2(T-\eta)+\alpha(\eta^{2}-s^{2})]a(s)ds\right) \\
&=&\rho_{2}M_{2}\Lambda_{2}^{-1}\\
&\geq&\rho_{2}=\|u\|,
\end{eqnarray*}
which implies
\begin{equation}\label{eq-55}
\|Au\|\geq\|u\|,\  \text{for} \ u\in K\cap\partial\Omega_{\rho_{2}}.
\end{equation}
Hence, since $\rho_{1}<\rho_{2}$ and from \eqref{eq-54} and \eqref{eq-55}, it follows from Theorem \ref{theo
2.1} that
$A$ has a fixed point $u$ in $K\cap (\overline{\Omega}_{\rho_{2}}\backslash \Omega _{\rho_{1}})$. Moreover, it
is a positive solution of the BVP \eqref{eq-2},\eqref{eq-3} and
\begin{equation*}\label{eq-56}
\rho_{1}<\|u\|<\rho_{2}.
\end{equation*}
The proof is therefore complete.
\end{proof}

\begin{corollary}\label{cor 5.2}
Assume that the following assumptions hold.
\begin{itemize}
\item[(H7)] $f_{0}=\alpha_{1}\in \left[0,\theta_{1}\Lambda_{1}\right)$, where $\theta_{1}\in(0,1]$.
\item[(H8)] $f_{\infty}=\beta_{1}\in \left(\frac{\theta_{2}}{\gamma}\Lambda_{2},\infty\right)$, where
    $\theta_{2}\geq1$.
\end{itemize}
Then, the BVP \eqref{eq-2},\eqref{eq-3} has at least one positive solution.
\end{corollary}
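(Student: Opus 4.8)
The plan is to deduce Corollary \ref{cor 5.2} directly from Theorem \ref{theo 5.1} by manufacturing constants $\rho_1,\rho_2,M_1,M_2$ from the limit conditions (H7)--(H8) so that hypotheses (H4) and (H6) are verified, with $\rho_1\neq\rho_2$. The only real work is the bookkeeping of constants; there is no deep obstacle.

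First I would handle (H4) from (H7). Put $M_1:=\theta_1\Lambda_1$; since $\theta_1\in(0,1]$ we have $M_1\in(0,\Lambda_1]$, and (H7) gives $f_0=\alpha_1<M_1$. By the definition of $f_0$ in \eqref{eq-4}, choosing $\epsilon=M_1-\alpha_1>0$ there is $\rho_1>0$ with $f(u)\le M_1u$ for all $u\in(0,\rho_1]$. Letting $u\to0^+$ and using the continuity of $f$ (assumption (B1)) forces $f(0)=0$, so in fact $f(u)\le M_1u\le M_1\rho_1$ for every $u\in[0,\rho_1]$. This is exactly (H4) with the constant $M_1\in(0,\Lambda_1]$.

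Next I would handle (H6) from (H8). Pick $\epsilon>0$ small enough that $m:=\beta_1-\epsilon>\frac{\theta_2}{\gamma}\Lambda_2$ (possible by (H8)), and set $M_2:=\gamma m$. Because $\theta_2\ge1$, we get $M_2=\gamma m>\gamma\cdot\frac{\theta_2}{\gamma}\Lambda_2=\theta_2\Lambda_2\ge\Lambda_2$, so $M_2\in[\Lambda_2,\infty)$. From $f_\infty=\beta_1>m$ there is $R>0$ with $f(u)\ge mu$ for all $u\ge R$. Now choose $\rho_2>\max\{R/\gamma,\rho_1\}$; then for every $u\in[\gamma\rho_2,\rho_2]$ we have $u\ge\gamma\rho_2>R$, hence $f(u)\ge mu\ge m\gamma\rho_2=M_2\rho_2$. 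This gives (H6), and by construction $\rho_1<\rho_2$, so $\rho_1\neq\rho_2$.

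Finally, with $\rho_1<\rho_2$ and both (H4) and (H6) in force, Theorem \ref{theo 5.1} applies and yields a positive solution $u$ of the BVP \eqref{eq-2},\eqref{eq-3} with $\rho_1<\|u\|<\rho_2$, completing the proof. The step that needs the most care is the choice $M_2=\gamma(\beta_1-\epsilon)$ rather than $\beta_1-\epsilon$: the factor $\gamma$ is exactly what cancels the $\gamma$ in the threshold $\frac{\theta_2}{\gamma}\Lambda_2$ of (H8) so that $M_2\ge\Lambda_2$ follows cleanly from $\theta_2\ge1$; everything else (in particular the value $f(0)$ in (H4)) is automatic from continuity and the one-sided limits.
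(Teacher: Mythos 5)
Your proposal is correct and follows essentially the same route as the paper: translate (H7) into (H4) and (H8) into (H6) by choosing $M_{1}=\theta_{1}\Lambda_{1}$ and an $M_{2}\geq\theta_{2}\Lambda_{2}$ from the limit definitions, arrange $\rho_{1}<\rho_{2}$, and invoke Theorem \ref{theo 5.1}. The only differences are cosmetic (your slightly smaller $\epsilon$ giving $M_{2}=\gamma(\beta_{1}-\epsilon)>\theta_{2}\Lambda_{2}$ instead of exactly $\theta_{2}\Lambda_{2}$, and your explicit remark that continuity forces $f(0)=0$ so the bound in (H4) holds on all of $[0,\rho_{1}]$, a point the paper passes over silently).
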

\begin{proof}
In view of $f_{0}=\alpha_{1}\in \left[0,\theta_{1}\Lambda_{1}\right)$, for
$\epsilon=\theta_{1}\Lambda_{1}-\alpha_{1}>0$, there exists a sufficiently large $\rho_{1}>0$ such that
\begin{equation*}\label{eq-57}
f(u)\leq(\alpha_{1}+\epsilon)u=\theta_{1}\Lambda_{1} u\leq\theta_{1}\Lambda_{1}\rho_{1},\ \text{for}\
u\in(0,\rho_{1}].
\end{equation*}
Since $\theta_{1}\in(0,1]$, then $\theta_{1}\Lambda_{1}\in(0,\Lambda_{1}]$. By the inequality above, {\rm (H4)}
is satisfied.
Since $f_{\infty}=\beta_{1}\in \left(\frac{\theta_{2}}{\gamma}\Lambda_{2},\infty\right)$, for
$\epsilon=\beta_{1}-\frac{\theta_{2}}{\gamma}\Lambda_{2}>0$, there exists a sufficiently large
$\rho_{2}(>\rho_{1})$ such that
\begin{equation*}\label{eq-58}
\frac{f(u)}{u}\geq\beta_{1}-\epsilon=\frac{\theta_{2}}{\gamma}\Lambda_{2},\ \text{for}\
u\in\left[\gamma\rho_{2},\infty\right),
\end{equation*}
thus, when $u\in\left[\gamma\rho_{2},\rho_{2}\right]$, one has
\begin{equation*}\label{eq-59}
f(u)\geq\frac{\theta_{2}}{\gamma}\Lambda_{2}u\geq\theta_{2}\Lambda_{2}\rho_{2}.
\end{equation*}
Since $\theta_{2}\geq1$, $\theta_{2}\Lambda_{2}\in\left[\Lambda_{2},\infty\right)$, then from the above
inequality, condition {\rm (H6)} is satisfied.
Hence, from Theorem \ref{theo 5.1} , the desired result holds.
\end{proof}

\begin{corollary}\label{cor 5.3}
Assume that the following assumptions hold.
\begin{itemize}
\item[(H9)] $f_{0}=\alpha_{2}\in \left(\frac{\theta_{2}}{\gamma}\Lambda_{2},\infty\right)$, where
    $\theta_{2}\geq1$.
\item[(H10)] $f_{\infty}=\beta_{2}\in \left[0,\theta_{1}\Lambda_{1}\right)$, where $\theta_{1}\in(0,1]$.
\end{itemize}
Then, the BVP \eqref{eq-2},\eqref{eq-3} has at least one positive solution.
\end{corollary}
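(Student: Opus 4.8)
The plan is to deduce Corollary \ref{cor 5.3} from Theorem \ref{theo 5.1} by checking that {\rm (H9)} and {\rm (H10)} force {\rm (H6)} and {\rm (H4)} respectively, with associated radii satisfying $\rho_{2}<\rho_{1}$. This is the mirror image of the argument for Corollary \ref{cor 5.2}, with the roles of $f_{0}$ and $f_{\infty}$ interchanged.

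First I would extract {\rm (H6)} from {\rm (H9)}, exactly as in the second half of the proof of Corollary \ref{cor 5.2}. Since $f_{0}=\alpha_{2}>\frac{\theta_{2}}{\gamma}\Lambda_{2}$, taking $\epsilon=\alpha_{2}-\frac{\theta_{2}}{\gamma}\Lambda_{2}>0$ there is a (small) $\rho_{2}>0$ with $f(u)/u\geq\alpha_{2}-\epsilon=\frac{\theta_{2}}{\gamma}\Lambda_{2}$ for $u\in(0,\rho_{2}]$; hence for $u\in[\gamma\rho_{2},\rho_{2}]$ one gets $f(u)\geq\frac{\theta_{2}}{\gamma}\Lambda_{2}u\geq\theta_{2}\Lambda_{2}\rho_{2}$. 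Because $\theta_{2}\geq1$ we have $M_{2}:=\theta_{2}\Lambda_{2}\in[\Lambda_{2},\infty)$, so {\rm (H6)} holds with this $\rho_{2}$ and $M_{2}$.

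Next I would extract {\rm (H4)} from {\rm (H10)}. Since $f_{\infty}=\beta_{2}<\theta_{1}\Lambda_{1}$, putting $\epsilon_{1}=\theta_{1}\Lambda_{1}-\beta_{2}>0$ there is $R>0$ with $f(u)\leq(\beta_{2}+\epsilon_{1})u=\theta_{1}\Lambda_{1}u$ for $u\geq R$. The delicate point is that {\rm (H4)} requires $f(u)\leq M_{1}\rho_{1}$ on all of $[0,\rho_{1}]$, so one must also control $f$ on the compact set $[0,R]$; I would handle this exactly as in the proof of Theorem \ref{theo 4.2}, splitting into two cases: if $f$ is unbounded, choose $\rho_{1}>\max\{R,\rho_{2}\}$ with $f(u)\leq f(\rho_{1})$ for $u\in[0,\rho_{1}]$, so $f(u)\leq f(\rho_{1})\leq\theta_{1}\Lambda_{1}\rho_{1}$ on $[0,\rho_{1}]$; if $f\leq L$ everywhere, choose $\rho_{1}\geq\max\{L/(\theta_{1}\Lambda_{1}),\,R,\,\rho_{2}+1\}$, so $f(u)\leq L\leq\theta_{1}\Lambda_{1}\rho_{1}$ on $[0,\rho_{1}]$. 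In either case {\rm (H4)} holds with $M_{1}:=\theta_{1}\Lambda_{1}\in(0,\Lambda_{1}]$ (using $\theta_{1}\in(0,1]$) and with $\rho_{1}>\rho_{2}$.

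Finally, since {\rm (H4)} and {\rm (H6)} both hold and $\rho_{1}>\rho_{2}$ (in particular $\rho_{1}\neq\rho_{2}$), Theorem \ref{theo 5.1} applies and produces a positive solution $u$ of the BVP \eqref{eq-2},\eqref{eq-3} with $\rho_{2}<\|u\|<\rho_{1}$. The only genuine obstacle is the bounded/unbounded dichotomy used to obtain {\rm (H4)} from the behaviour at infinity; everything else is a direct transcription of the estimates already employed for Corollary \ref{cor 5.2} and Theorem \ref{theo 4.2}.
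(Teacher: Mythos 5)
Your proposal is correct and follows essentially the same route as the paper: obtain (H6) from (H9) with a small $\rho_{2}$, obtain (H4) from (H10) via the bounded/unbounded dichotomy (as in Theorem \ref{theo 4.2}), and conclude by Theorem \ref{theo 5.1}. Your explicit enforcement of $\rho_{1}>\rho_{2}$ in the bounded case is a slightly more careful bookkeeping than the paper's "sufficiently large $\rho_{1}$", but the argument is the same.
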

\begin{proof}
Since $f_{0}=\alpha_{2}\in \left(\frac{\theta_{2}}{\gamma}\Lambda_{2},\infty\right)$, for
$\epsilon=\alpha_{2}-\frac{\theta_{2}}{\gamma}\Lambda_{2}>0$, there exists a sufficiently
small $\rho_{2}>0$ such that
\begin{equation*}\label{eq-60}
\frac{f(u)}{u}\geq\alpha_{2}-\epsilon=\frac{\theta_{2}}{\gamma}\Lambda_{2},\ \text{for}\
u\in\left(0,\rho_{2}\right).
\end{equation*}
Thus, when $u\in\left[\gamma\rho_{2},\rho_{2}\right]$, one has
\begin{equation*}\label{eq-61}
f(u)\geq\frac{\theta_{2}}{\gamma}\Lambda_{2}u\geq\theta_{2}\Lambda_{2}\rho_{2}.
\end{equation*}
which yields the condition {\rm (H6)} of Theorem \ref{theo 4.2}.

In view of $f_{\infty}=\beta_{2}\in \left[0,\theta_{1}\Lambda_{1}\right)$, for
$\epsilon=\theta_{1}\Lambda_{1}-\beta_{2}>0$, there exists a sufficiently large $\rho_{0}(>\rho_{2})$ such that
\begin{equation}\label{eq-60}
\frac{f(u)}{u}\leq\beta_{2}+\epsilon=\theta_{1}\Lambda_{1},\ \text{for}\ u\in\left[\rho_{0},\infty\right).
\end{equation}
We consider the following two cases:

Case (i). Suppose that $f(u)$ is unbounded. Then from $f\in C([0,\infty),[0,\infty))$, we know that there
is $\rho_{1}>\rho_{0}$ such that
\begin{equation}\label{eq-61}
f(u)\leq f(\rho_{1}),   \ \ \text{for}\  u\in\left[0,\rho_{1}\right].
\end{equation}
Since $\rho_{1}>\rho_{0}$, then from \eqref{eq-60}, \eqref{eq-61}, one has
\begin{equation*}\label{eq-62}
f(u)\leq f(\rho_{1})\leq \theta_{1}\Lambda_{1}\rho_{1},   \ \ \text{for} \  u\in\left[0,\rho_{1}\right].
\end{equation*}
Since $\theta_{1}\in(0,1]$, then $\theta_{1}\Lambda_{1}\in(0,\Lambda_{1}]$. By the inequality above, {\rm (H4)}
is satisfied.

Case (ii). Suppose that $f(u)$ is bounded, say
\begin{equation}\label{eq-63}
f(u)\leq L, \ \  \text{for all} \ \ u\in \left[0,\infty\right)
\end{equation}

In this case, taking sufficiently large $\rho_{1}>\frac{L}{\theta_{1}\Lambda_{1}}$, then from \eqref{eq-63}, we
know
\begin{equation*}\label{eq-64}
f(u)\leq L\leq\theta_{1}\Lambda_{1}\rho_{1}, \ \ \text{for} \  u\in\left[0,\rho_{1}\right].
\end{equation*}
Since $\theta_{1}\in(0,1]$, then $\theta_{1}\Lambda_{1}\in(0,\Lambda_{1}]$. By the inequality above, {\rm (H4)}
is satisfied.
Hence, from Theorem \ref{theo 5.1}, we get the conclusion of Corollary \ref{cor 5.3}.
\end{proof}

\begin{corollary}\label{cor 5.4}
Assume that the previous hypotheses {\rm (H4)}, {\rm (H8)} and {\rm (H9)} hold. Then, the BVP
\eqref{eq-2},\eqref{eq-3} has at least two positive solutions $u_{1}$ and $u_{2}$ such that
\begin{equation*}\label{eq-65}
0<\|u_{1}\|<\rho_{1}<\|u_{2}\|.
\end{equation*}
\end{corollary}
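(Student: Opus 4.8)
The plan is to combine the two ``single-solution'' mechanisms that were packaged into Corollaries \ref{cor 5.2} and \ref{cor 5.3} and feed the resulting triple of radii into the Krasnoselskii theorem once more, exactly as in the proofs of Theorems \ref{theo 4.1} and \ref{theo 4.2}. First I would unwind (H9), i.e. $f_{0}=\alpha_{2}\in\left(\frac{\theta_{2}}{\gamma}\Lambda_{2},\infty\right)$ with $\theta_{2}\ge 1$: as in the proof of Corollary \ref{cor 5.3}, for $\epsilon=\alpha_{2}-\frac{\theta_{2}}{\gamma}\Lambda_{2}>0$ there is a small $\rho_{\star}>0$ with $f(u)\ge\frac{\theta_{2}}{\gamma}\Lambda_{2}\,u$ on $(0,\rho_{\star}]$, hence on $K\cap\partial\Omega_{\rho_{\star}}$ the estimate on $Au(\eta)$ from \eqref{eq-25} gives $\|Au\|\ge u(\eta)\ge\rho_{\star}\,\theta_{2}\Lambda_{2}\Lambda_{2}^{-1}\ge\rho_{\star}=\|u\|$, so $\|Au\|\ge\|u\|$ on $K\cap\partial\Omega_{\rho_{\star}}$.

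Next I would invoke (H4) directly: there are $\rho_{1}>0$ and $M_{1}\in(0,\Lambda_{1}]$ with $f(u)\le M_{1}\rho_{1}$ on $[0,\rho_{1}]$, and as in the third part of the proof of Theorem \ref{theo 4.1} this yields $\|Au\|\le\|u\|$ on $K\cap\partial\Omega_{\rho_{1}}$. I would note that the $\rho_{\star}$ produced from (H9) can be taken strictly smaller than $\rho_{1}$ (shrink it if necessary), so $\rho_{\star}<\rho_{1}$. Then I would unwind (H8), $f_{\infty}=\beta_{1}\in\left(\frac{\theta_{2}}{\gamma}\Lambda_{2},\infty\right)$: for $\epsilon=\beta_{1}-\frac{\theta_{2}}{\gamma}\Lambda_{2}>0$ there is $\rho^{\star}>\rho_{1}$ with $f(u)\ge\frac{\theta_{2}}{\gamma}\Lambda_{2}\,u$ for $u\ge\gamma\rho^{\star}$; since $u\in K\cap\partial\Omega_{\rho^{\star}}$ forces $\min_{[0,T]}u\ge\gamma\rho^{\star}$, the same computation on $Au(\eta)$ as in \eqref{eq-42}--\eqref{eq-43} gives $\|Au\|\ge\|u\|$ on $K\cap\partial\Omega_{\rho^{\star}}$.

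Finally, with $\rho_{\star}<\rho_{1}<\rho^{\star}$ and the three boundary inequalities $\|Au\|\ge\|u\|$ on $\partial\Omega_{\rho_{\star}}$, $\|Au\|\le\|u\|$ on $\partial\Omega_{\rho_{1}}$, $\|Au\|\ge\|u\|$ on $\partial\Omega_{\rho^{\star}}$, together with the complete continuity of $A:K\to K$ established after \eqref{eq-26}, I would apply Theorem \ref{theo 2.1} twice (case (ii) on the annulus $\overline{\Omega}_{\rho_{1}}\setminus\Omega_{\rho_{\star}}$ and case (i) on $\overline{\Omega}_{\rho^{\star}}\setminus\Omega_{\rho_{1}}$) to obtain fixed points $u_{1}\in K\cap(\overline{\Omega}_{\rho_{1}}\setminus\Omega_{\rho_{\star}})$ and $u_{2}\in K\cap(\overline{\Omega}_{\rho^{\star}}\setminus\Omega_{\rho_{1}})$. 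By Lemma \ref{lem 2.2} both are nonnegative, and since $u_{1},u_{2}\in K$ with $\|u_{i}\|>0$ they are positive solutions of \eqref{eq-2},\eqref{eq-3}; moreover $\rho_{\star}\le\|u_{1}\|\le\rho_{1}\le\|u_{2}\|\le\rho^{\star}$, so $0<\|u_{1}\|<\rho_{1}<\|u_{2}\|$ (the strictness of the outer inequalities being forced, as in Theorem \ref{theo 4.1}, by the fact that equality would conflict with the corresponding boundary estimate unless $u_{i}$ lies in the interior of the annulus). The only point requiring a little care — and the place I expect to spend the most attention — is arranging the ordering $\rho_{\star}<\rho_{1}<\rho^{\star}$: $\rho_{1}$ is fixed by (H4) and cannot be moved, so one must genuinely use that $\rho_{\star}$ from (H9) may be taken arbitrarily small and $\rho^{\star}$ from (H8) arbitrarily large; everything else is a transcription of computations already carried out in Sections 3 and 4.
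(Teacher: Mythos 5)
Your proposal is correct and is essentially the paper's argument: the paper translates (H9) and (H8) into the (H6)-type condition at a small radius $\rho_{2}^{\star}<\rho_{1}$ and a large radius $\rho_{2}>\rho_{1}$ and then invokes Theorem \ref{theo 5.1} twice with (H4), which is exactly the pair of cone-expansion estimates plus the (H4) compression estimate that you carry out directly via Theorem \ref{theo 2.1}. The only differences are presentational (you inline the Krasnoselskii computations instead of citing Theorem \ref{theo 5.1}, and you write $\|Au\|\ge u(\eta)$ where $\|Au\|\ge Au(\eta)$ is meant), and your treatment of the strict inequalities is at the same level of detail as the paper's.
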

\begin{proof}
From {\rm (H8)} and the proof of Corollary \ref{cor 5.2}, we know that there exists a sufficiently large
$\rho_{2}>\rho_{1}$,  such that
\begin{equation*}\label{eq-66}
f(u)\geq\theta_{2}\Lambda_{2}\rho_{2}=M_{2}\rho_{2},   \ \ \text{for}\
u\in\left[\gamma\rho_{2},\rho_{2}\right],
\end{equation*}
where $M_{2}=\theta_{2}\Lambda_{2}\in \left[\Lambda_{2},\infty\right)$.

In view of {\rm (H9)} and the proof of Corollary \ref{cor 5.3}, we see that there exists a sufficiently small
$\rho_{2}^{\star}\in\left(0,\rho_{1}\right)$ such that
\begin{equation*}\label{eq-67}
f(u)\geq\theta_{2}\Lambda_{2}\rho_{2}^{\star}=M_{2}\rho_{2}^{\star},   \ \ \text{for}\
u\in\left[\gamma\rho_{2}^{\star},\rho_{2}^{\star}\right],
\end{equation*}
where $M_{2}=\theta_{2}\Lambda_{2}\in \left[\Lambda_{2},\infty\right)$.

Using this and {\rm (H4)}, we know by Theorem \ref{theo 5.1} that the BVP \eqref{eq-2},\eqref{eq-3} has two
positive
solutions $u_{1}$ and $u_{2}$ such that
\begin{equation*}\label{eq-68}
\rho_{2}^{\star}<\|u_{1}\|<\rho_{1}<\|u_{2}\|<\rho_{2}.
\end{equation*}
Thus, the proof is complete.
\end{proof}

\begin{corollary}\label{cor 5.5}
Assume that the previous hypotheses {\rm (H6)}, {\rm (H7)} and {\rm (H10)} hold. Then, the BVP
\eqref{eq-2},\eqref{eq-3} has at least two positive solutions $u_{1}$ and $u_{2}$ such that
\begin{equation*}\label{eq-69}
0<\|u_{1}\|<\rho_{2}<\|u_{2}\|.
\end{equation*}
\end{corollary}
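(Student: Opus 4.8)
The plan is to mimic the structure of Theorem \ref{theo 4.2}, since Corollary \ref{cor 5.5} asserts the existence of two positive solutions straddling the value $\rho_2$ from {\rm (H6)}. The strategy is to produce \emph{three} nested balls $\Omega_{\rho_*}\subset\Omega_{\rho_2}\subset\Omega_{\rho^*}$ in the cone $K$, with norm-compression of $A$ on the inner and outer boundaries and norm-expansion on the middle boundary $\partial\Omega_{\rho_2}$, and then apply Krasnoselskii's theorem (Theorem \ref{theo 2.1}) twice: once on $K\cap(\overline{\Omega}_{\rho_2}\setminus\Omega_{\rho_*})$ to get $u_1$ with $0<\|u_1\|<\rho_2$, and once on $K\cap(\overline{\Omega}_{\rho^*}\setminus\Omega_{\rho_2})$ to get $u_2$ with $\rho_2<\|u_2\|$.

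First I would handle the middle boundary. This is immediate: {\rm (H6)} gives constants $\rho_2>0$ and $M_2\in[\Lambda_2,\infty)$ with $f(u)\geq M_2\rho_2$ for $u\in[\gamma\rho_2,\rho_2]$, and since $u\in K\cap\partial\Omega_{\rho_2}$ satisfies $\gamma\rho_2=\gamma\|u\|\leq\min u(t)\leq u(s)\leq\|u\|=\rho_2$ for all $s$, the computation of $Au(\eta)$ exactly as in the proof of Theorem \ref{theo 4.2} (using the rewriting $Au(\eta)=\frac{1}{2(1-\alpha\eta)}\int_0^\eta[2(T-\eta)+\alpha(\eta^2-s^2)]a(s)f(u(s))\,ds+\frac{1}{1-\alpha\eta}\int_\eta^T(T-s)a(s)f(u(s))\,ds$) yields $Au(\eta)\geq\rho_2 M_2\Lambda_2^{-1}\geq\rho_2=\|u\|$, hence $\|Au\|\geq\|u\|$ on $K\cap\partial\Omega_{\rho_2}$.

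Next I would handle the inner boundary $\partial\Omega_{\rho_*}$. Here {\rm (H7)} ($f_0=\alpha_1\in[0,\theta_1\Lambda_1)$) is in force; exactly as in the proof of Corollary \ref{cor 5.2}, taking $\epsilon=\theta_1\Lambda_1-\alpha_1>0$ there is $\rho_*\in(0,\rho_2)$ with $f(u)\leq\theta_1\Lambda_1 u\leq\Lambda_1 u$ for $u\in(0,\rho_*]$ (using $\theta_1\in(0,1]$). Then for $u\in K\cap\partial\Omega_{\rho_*}$ the standard estimate $Au(t)\leq\frac{1}{1-\alpha\eta}\int_0^T(T-s)a(s)f(u(s))\,ds\leq\frac{\Lambda_1\rho_*}{1-\alpha\eta}\int_0^T(T-s)a(s)\,ds=\rho_*$ gives $\|Au\|\leq\|u\|$ on $K\cap\partial\Omega_{\rho_*}$. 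For the outer boundary $\partial\Omega_{\rho^*}$, I would invoke {\rm (H10)} ($f_\infty=\beta_2\in[0,\theta_1\Lambda_1)$) and repeat verbatim the two-case argument ($f$ unbounded vs.\ bounded) from the proof of Theorem \ref{theo 4.2}/Corollary \ref{cor 5.3}: with $\epsilon=\theta_1\Lambda_1-\beta_2>0$ there is $\rho_0>\rho_2$ with $f(u)\leq\theta_1\Lambda_1 u$ for $u\geq\rho_0$; in the unbounded case pick $\rho^*>\rho_0$ with $f(u)\leq f(\rho^*)\leq\theta_1\Lambda_1\rho^*$ on $[0,\rho^*]$, in the bounded case ($f\leq L$) take $\rho^*\geq\max\{L/(\theta_1\Lambda_1),\rho_0\}$; either way $\|Au\|\leq\|u\|$ on $K\cap\partial\Omega_{\rho^*}$.

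Finally, with $\rho_*<\rho_2<\rho^*$ and compression–expansion–compression established, part (ii) of Theorem \ref{theo 2.1} applied on $K\cap(\overline{\Omega}_{\rho_2}\setminus\Omega_{\rho_*})$ and part (i) applied on $K\cap(\overline{\Omega}_{\rho^*}\setminus\Omega_{\rho_2})$ produce fixed points $u_1,u_2$ of $A$ with $\rho_*<\|u_1\|<\rho_2<\|u_2\|<\rho^*$, which are positive solutions of the BVP \eqref{eq-2},\eqref{eq-3} and satisfy $0<\|u_1\|<\rho_2<\|u_2\|$. There is no real obstacle here: the corollary is a direct repackaging, and the only mild point of care is making sure one can choose $\rho_*<\rho_2$ (possible because $f_0=\alpha_1$ forces the bound on a full right-neighborhood of $0$, which can be shrunk below $\rho_2$) and $\rho^*>\rho_2$ (automatic, since $\rho_0$ and hence $\rho^*$ can be taken as large as desired); complete continuity of $A:K\to K$ and $AK\subset K$ were already noted after \eqref{eq-26}.
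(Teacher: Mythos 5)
Your proposal is correct and follows essentially the same route as the paper: from {\rm (H7)} you extract an {\rm (H4)}-type bound at a small radius $\rho_{\star}\in(0,\rho_{2})$ (as in Corollary \ref{cor 5.2}), from {\rm (H10)} one at a large radius $\rho^{\star}>\rho_{2}$ via the bounded/unbounded dichotomy (as in Corollary \ref{cor 5.3}), and you combine these with the {\rm (H6)} expansion estimate at $\rho_{2}$. The only difference is presentational: the paper invokes Theorem \ref{theo 5.1} twice, whereas you inline the two applications of Theorem \ref{theo 2.1} directly, which amounts to the same argument.
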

\begin{proof}
By {\rm (H7)} and the proof of Corollary \ref{cor 5.2}, we obtain that there exists sufficiently small
$\rho_{1}\in\left(0,\rho_{2}\right)$ such that
\begin{equation*}\label{eq-70}
f(u)\leq\theta_{1}\Lambda_{1}\rho_{1}=M_{1}\rho_{1}, \ \ \text{for}\  u\in(0,\rho_{1}],
\end{equation*}
where $M_{1}=\theta_{1}\Lambda_{1}\in\left(0,\Lambda_{1}\right]$.

In view of {\rm (H10)} and the proof of Corollary \ref{cor 5.3}, there exists a sufficiently large
$\rho^{\star}_{1}>\rho_{2}$ such
that
\begin{equation*}\label{eq-71}
f(u)\leq\theta_{1}\Lambda_{1}\rho^{\star}_{1}=M_{1}\rho^{\star}_{1}, \ \ \text{for}\  u\in(0,\rho^{\star}_{1}],
\end{equation*}
where $M_{1}=\theta_{1}\Lambda_{1}\in\left(0,\Lambda_{1}\right]$.

Using this and {\rm (H6)}, we see by Theorem \ref{theo 5.1} that the BVP \eqref{eq-2},\eqref{eq-3} has two
positive solutions
$u_{1}$ and $u_{2}$ such that
\begin{equation*}\label{eq-72}
\rho_{1}<\|u_{1}\|<\rho_{2}<\|u_{2}\|<\rho^{\star}_{1}.
\end{equation*}
This completes the proof.
\end{proof}

\section{Examples}
In this section we present some examples to illustrate our main results.

\begin{exmp}
Consider the boundary value problem

\begin{equation}\label{eq-73}
{u^{\prime \prime }}(t)+tu^{p}=0, \  \ 0<t<1,
\end{equation}

\begin{equation}\label{eq-74}
{u^{\prime}}(0)=0, \  \ u(1)= 2\int_{0}^{\frac{1}{4}}u(s)ds.
\end{equation}

Set $\alpha=2$, $\eta=1/4$, $T=1$, $a(t)=t$, $f(u)=u^{p}$ $(p\in (0, 1)\cup(1, \infty)$. We can show that
$0<\alpha=2<4=1/\eta$.\\
Now we consider the existence of positive solutions of the problem \eqref{eq-73}, \eqref{eq-74} in two
cases.\\
Case 1: $p>1$. In this case, $f_{0}=0$, $f_{\infty}=\infty$ and {\rm (H1)} holds. Then, by Theorem \ref{theo
3.1}, the BVP \eqref{eq-73}, \eqref{eq-74} has at least one positive solution.\\
Case 2: $p\in (0, 1)$. In this case, $f_{0}=\infty$, $f_{\infty}=0$ and {\rm (H2)} holds. Then, by Theorem
\ref{theo 3.1}, the BVP \eqref{eq-73}, \eqref{eq-74} has at least one positive solution.
\end{exmp}

\begin{exmp}
Consider the boundary value problem

\begin{equation}\label{eq-75}
{u^{\prime \prime }}(t)+t^{2}u^{2}e^{u}=0, \  \ 0<t<\frac{3}{4},
\end{equation}

\begin{equation}\label{eq-76}
{u^{\prime}}(0)=0, \  \ u(\frac{3}{4})= \frac{3}{2}\int_{0}^{\frac{1}{2}}u(s)ds.
\end{equation}

Set $\alpha=3/2$, $\eta=1/2$, $T=3/4$, $a(t)=t^{2}$, $f(u)=u^{2}e^{u}$. We can show that
$0<\alpha=3/2<2=1/\eta$. Through a simple calculation we can get $f_{0}=0$ and $f_{\infty}=\infty$, which
implies that the condition {\rm (H1)} holds. Hence, by Theorem \ref{theo 3.1}, BVP \eqref{eq-75}, \eqref{eq-76})
has at least one positive solution.
\end{exmp}

\begin{exmp}
Consider the boundary value problem

\begin{equation}\label{eq-77}
{u^{\prime \prime }}(t)+e^{t}\frac{\sin{u}}{u^{2}}=0, \  \ 0<t<1,
\end{equation}

\begin{equation}\label{eq-78}
{u^{\prime}}(0)=0, \  \ u(1)= \frac{1}{2}\int_{0}^{\frac{1}{3}}u(s)ds.
\end{equation}

Set $\alpha=1/2$, $\eta=1/3$, $T=1$, $a(t)=e^{t}$, $f(u)=\sin{u}/{u^{2}}$. We can show that
$0<\alpha=1/2<3=1/\eta$. Through a simple calculation we can get $f_{0}=\infty$ and $f_{\infty}=0$, which
implies that the condition {\rm (H2)} holds. Hence, by Theorem \ref{theo 3.1}, BVP \eqref{eq-77}, \eqref{eq-78})
has at least one positive solution.
\end{exmp}

\begin{exmp}
\begin{equation}\label{eq-79}
{u^{\prime \prime }}(t)+\frac{5}{32}(2-t)^{3}(\frac{u^{\frac{1}{2}}}{2}+\frac{u^{2}}{32})=0, \  \ 0<t<2,
\end{equation}

\begin{equation}\label{eq-80}
{u^{\prime}}(0)=0, \  \ u(2)=2 \int_{0}^{1}u(s)ds.
\end{equation}
Set $\alpha=1/2$, $\eta=1$, $T=2$, $a(t)=\frac{5}{32}(2-t)^{3}$,
$f(u)=\frac{u^{\frac{1}{2}}}{2}+\frac{u^{2}}{32}$. We can show that  $0<\alpha=1/2<1=1/\eta$. Since
$f_{0}=f_{\infty}=\infty$, then {\rm (H3)} holds. Again

\begin{equation*}\label{eq-81}
\Lambda_{1}=(1-\alpha\eta)(\int_{0}^{T}(T-s)a(s)ds)^{-1}=\frac{1}{2},
\end{equation*}

because $f(u)$ is monotone increasing function for $u\geq0$, taking $\rho_{1}=4$, $M_{1}=3/8\in(0,\Lambda_{1}]$,
then when $u\in[0,\rho_{1}]$, we get

\begin{equation*}\label{eq-82}
f(u)\leq f(4)=3/2=M_{1}\rho_{1},
\end{equation*}
which implies {\rm (H4)} holds. Hence, by Theorem \ref{theo 4.1}, the BVP \eqref{eq-79}, \eqref{eq-80} has at
least two positive solutions $u_{1}$ and $u_{2}$ such that
\begin{equation*}\label{eq-83}
0<\|u_{1}\|<4<\|u_{2}\|.
\end{equation*}
\end{exmp}

\begin{exmp}
Consider the boundary value problem
\begin{equation}\label{eq-84}
{u^{\prime \prime }}(t)+8e^{3}u^{2}e^{-u}=0, \  \ 0<t<\frac{3}{4},
\end{equation}

\begin{equation}\label{eq-85}
{u^{\prime}}(0)=0, \  \ u(\frac{3}{4})=3\int_{0}^{\frac{1}{4}}u(s)ds.
\end{equation}
Set $\alpha=3$, $\eta=1/4$, $T=3/4$, $a(t)\equiv8$, $f(u)=e^{3}u^{2}e^{-u}$.
We can show that $0<\alpha=3<4=1/\eta$. Since $f_{0}=f_{\infty}=0$, then {\rm (H5)} holds. Again

\begin{eqnarray*}
\gamma&=&\frac{\alpha\eta(T-\eta)}{T-\alpha\eta^{2}}=\frac{2}{3},\\
\Lambda_{2}&=&\frac{1-\alpha\eta}{\gamma\left(\int_{\eta}^{T}(T-s)a(s)ds+\frac{1}{2}\int_{0}^{\eta}\left[2(T-\eta)+\alpha(\eta^{2}-s^{2})\right]a(s)ds\right)}\\
&=& \frac{3}{17},
\end{eqnarray*}

because $f(u)$ is monotone decreasing function for $u\geq2$, taking $\rho_{2}=3$,
$M_{2}=3\in[\Lambda_{2},\infty)$, then when $u\in[\gamma\rho_{2},\rho_{2} ]$, we obtain

\begin{equation*}\label{eq-86}
f(u)\geq f(3)=9=M_{2}\rho_{2},
\end{equation*}

which implies {\rm (H6)} holds. Hence, by Theorem \ref{theo 4.2}, the BVP \eqref{eq-84}, \eqref{eq-85} has at
least two positive solutions $u_{1}$ and $u_{2}$ such that
\begin{equation*}\label{eq-87}
0<\|u_{1}\|<3<\|u_{2}\|.
\end{equation*}
\end{exmp}

\begin{exmp}
Consider the boundary value problem
\begin{equation}\label{eq-88}
{u^{\prime \prime }}(t)+\frac{aue^{2u}}{b+e^{u}+e^{2u}}=0, \  \ 0<t<1,
\end{equation}

\begin{equation}\label{eq-89}
{u^{\prime}}(0)=0, \  \ u(1)=2\int_{0}^{\frac{1}{3}}u(s)ds,
\end{equation}
where $a=5$, $b=8$. Set $\alpha=2$, $\eta=1/3$, $T=1$, $a(t)\equiv1$, $f(u)=(aue^{2u})/(b+e^{u}+e^{2u})$. We can
show that $0<\alpha=2<3=1/\eta$. Again

\begin{eqnarray*}
\gamma&=&\frac{\alpha\eta(T-\eta)}{T-\alpha\eta^{2}}=\frac{4}{7},\\
\Lambda_{2}&=&\frac{1-\alpha\eta}{\gamma\left(\int_{\eta}^{T}(T-s)a(s)ds+\frac{1}{2}\int_{0}^{\eta}\left[2(T-\eta)+\alpha(\eta^{2}-s^{2})\right]a(s)ds\right)}\\
&=&\frac{189}{152},\\
\Lambda_{1}&=&(1-\alpha\eta)(\int_{0}^{T}(T-s)a(s)ds)^{-1}=\frac{2}{3},
\end{eqnarray*}

and

\[f_{0}=\frac{a}{b+2}=\frac{1}{2},\ \ f_{\infty}=a=5\].

Taking $\theta_{1}\in(3/4, 1]$, $\theta_{2}\in[1, 2]$, thus $f_{0}\in(0, \theta_{1}\Lambda_{1})$,
$f_{\infty}\in((\theta_{2}/\gamma)\Lambda_{2}, \infty)$, which imply {\rm (H7)} and {\rm (H8)} hold. Therefore,
by Corollary \ref{cor 5.2}, the BVP \eqref{eq-88}, \eqref{eq-89} has at least one positive solution.
\end{exmp}

\begin{exmp}
Consider the boundary value problem
\begin{equation}\label{eq-90}
{u^{\prime \prime }}(t)+\frac{1}{5}u(1+\frac{\lambda}{1+u^{2}})=0, \  \ 0<t<1,
\end{equation}

\begin{equation}\label{eq-91}
{u^{\prime}}(0)=0, \  \ u(1)=\int_{0}^{\frac{1}{2}}u(s)ds,
\end{equation}
where $\lambda\geq80$. Set $\alpha=1$, $\eta=1/2$, $T=1$, $a(t)\equiv\frac{1}{5}$,
$f(u)=u(1+\frac{\lambda}{1+u^{2}})$. We can show that $0<\alpha=1<2=1/\eta$. Again

\begin{eqnarray*}
\gamma&=&\frac{\alpha\eta(T-\eta)}{T-\alpha\eta^{2}}=\frac{1}{3},\\
\Lambda_{2}&=&\frac{1-\alpha\eta}{\gamma\left(\int_{\eta}^{T}(T-s)a(s)ds+\frac{1}{2}\int_{0}^{\eta}\left[2(T-\eta)+\alpha(\eta^{2}-s^{2})\right]a(s)ds\right)}\\
&=& 18,\\
\Lambda_{1}&=&(1-\alpha\eta)(\int_{0}^{T}(T-s)a(s)ds)^{-1}=5,
\end{eqnarray*}

and

\[f_{0}=1+\lambda,\ \ f_{\infty}=1\].

Taking $\theta_{1}\in(1/5, 1]$, $\theta_{2}\in[1, 3/2)$, thus
$f_{0}\in((\frac{\theta_{2}}{\gamma})\Lambda_{2},\infty) $, $f_{\infty}\in(0, \theta_{1}\Lambda_{1})$, which
imply {\rm (H9)} and {\rm (H10)} hold. Therefore, by Corollary \ref{cor 5.3}, the BVP \eqref{eq-90},
\eqref{eq-91} has at least one positive solution.
\end{exmp}


\begin{thebibliography}{9}



\bibitem{Ander1}
D. R. Anderson; Nonlinear triple-point problems on time scales,
  \emph{Electron. J. Diff. Eqns.,} \textbf{47} (2004), 1--12.

\bibitem{Ander2}
D. R. Anderson; Solutions to second order three-point problems on time
  scales, \emph{J. Difference Equ. Appl.,} \textbf{8} (2002), 673--688.

\bibitem{Cheng}
Z. Chengbo; Positive solutions for semi-positone three-point boundary
  value problems, \emph{J. Comput. Appl. Math.,} \textbf{228} (2009), 279--286.

\bibitem{Feng}
W. Feng, J. R. L. Webb; Solvability of a three-point nonlinear boundary value
  problem at resonance, \emph{Nonlinear Analysis TMA,} \textbf{30(6)}(1997), 3227--3238.


\bibitem{Gupt}
C.P. Gupta; Solvability of a three-point nonlinear boundary value problem
  for a second order ordinary differential equations , \emph{J. Math. Anal.
  Appl.,} \textbf{168} (1992), 540--551.

\bibitem{Guo}
Y. Guo, W. Ge; Positive solutions for three-point boundary value problems
  with dependence on the first order derivative, \emph{J. Math. Anal. Appl.,}
  \textbf{290} (2004), 291--301.


\bibitem{Han}
X. Han; Positive solutions for a three-point boundary value problem,
  \emph{Nonlinear Analysis TMA,} \textbf{66} (2007), 679--688.

\bibitem{He}
X. He, W. Ge; Triple solutions for second order three-point boundary
  value problems, \emph{J. Math. Anal. Appl.,} \textbf{268} (2002), 256--265.


\bibitem{Haddou1}
F. Haddouchi, S. Benaicha; Positive solutions of nonlinear three-point
  integral boundary value problems for second-order differential equations,
\url{[http://arxiv.org/abs/1205.1844]}, May, 2012.

\bibitem{Haddou2}
F. Haddouchi, S. Benaicha; Existence of positive solutions for a
three-point integral boundary-value problem,
\url{[http://arxiv.org/abs/1304.5644]}, April, 2013.


\bibitem{Ilin}
 V. A. Il'in, E. I. Moiseev; Nonlocal boundary-value problem of the first kind
  for a Sturm-Liouville operator in its differential and finite difference
  aspects, \emph{Differ. Equ.,} \textbf{23(7)} (1987), 803--810.


\bibitem{Krasn}
M. A. Krasnoselskii; Positive Solutions of Operator Equations, P. Noordhoff, Groningen, The Netherlands, 1964.

\bibitem{Li}
J. Li, J. Shen; Multiple positive solutions for a second-order
  three-point boundary value problem, \emph{Appl. Math. Comput.,} \textbf{182}(2006),
  258--268.


\bibitem{Luo}
H. Luo, Q. Ma; Positive solutions to a generalized second-order
  three-point boundary-value problem on time scales, \emph{Electron. J. Diff.
  Eqns.,} \textbf{17} (2005), 1--14.

\bibitem{Liang1}
S. Liang, L. Mu; Multiplicity of positive solutions for singular
  three-point boundary value problems at resonance, \emph{Nonlinear Analysis
  TMA,} \textbf{71} (2009), 2497--2505.

\bibitem{Liang2}
R. Liang, J. Peng, J. Shen; Positive solutions to a generalized second
  order three-point boundary value problem, \emph{Appl. Math. Comput.,} \textbf{196} (2008), 931--940.


\bibitem{Liu1}
B. Liu; Positive solutions of a nonlinear three-point boundary value
  problem, \emph{Appl. Math. Comput.,}  \textbf{132} (2002), 11--28.


\bibitem{Liu2}
B. Liu, L. Liu, Y. Wu; Positive solutions for singular second order
  three-point boundary value problems, \emph{Nonlinear Analysis TMA,} \textbf{66} (2007), 2756--2766.

\bibitem{Liu3}
B. Liu; Positive solutions of a nonlinear three-point boundary value
  problem, \emph{Comput. Math. Appl.,} \textbf{44} (2002), 201--211.


\bibitem{Liu4}
B. Liu, L. Liu, Y. Wu; Positive solutions for singular second order
  three-point boundary value problems, \emph{Appl. Math. Comput.,} \textbf{132} (2002), 11–-28.


\bibitem{Ma1}
R. Ma; Existence theorems for a second order three-point boundary value
  problem, \emph{J. Math. Anal. Appl.,} \textbf{212} (1997), 430--442.

\bibitem{Ma2}
R. Ma; Multiplicity of positive solutions for second-order three-point
  boundary value problems, \emph{Comput. Math. Appl.,} \textbf{40} (2000), 193--204.

 \bibitem{Ma3}
R. Ma; Positive solutions for a nonlinear three-point boundary value
  problem, \emph{Electron. J. Diff. Eqns.,}  \textbf{34} (1999), 1--8.

\bibitem{Ma4}
R. Ma; Positive solutions for second-order three-point boundary value
  problems, \emph{Appl. Math. Lett.,} \textbf{14} (2001), 1--5.


\bibitem{Ma5}
R. Ma, H. Wang; Positive solutions of nonlinear three-point boundary
  value problem, \emph{J. Math. Anal. Appl.,} \textbf{279} (2003), 216--227.

\bibitem{Maran}
S. A. Marano; A remark on a second order three-point boundary value
  problem, \emph{J. Math. Anal. Appl.,} \textbf{183}(1994), 581--522.


\bibitem{Pang}
H. Pang, M. Feng, W. Ge; Existence and monotone iteration of positive
  solutions for a three-point boundary value problem, \emph{Appl. Math. Lett.,}
  \textbf{21} (2008), 656--661.



\bibitem{Sun1}
H. R. Sun, W. T. Li; Positive solutions for nonlinear three-point boundary
  value problems on time scales, \emph{J. Math. Anal. Appl.,} \textbf{299} (2004), 508--524.

\bibitem{Sun2}
Y. Sun, L. Liu, J. Zhang, R. P. Agarwal; Positive solutions of singular
  three-point boundary value problems for second-order differential equations,
  \emph{J. Comput. Appl. Math.,} \textbf{230} (2009), 738--750.


\bibitem{Tarib}
J. Tariboon, T. Sitthiwirattham; Positive solutions of a nonlinear
  three-point integral boundary value problem, \emph{Bound. Val. Prob.,} ID 519210, doi:10.1155/2010/519210
  (2010), 11 pages.


\bibitem{Webb1} J. R. L. Webb; A unified approach to nonlocal boundary value problems,
\emph{Dynam. Systems Appl.,} 5 (2008), 510-515.

\bibitem{Webb2} J. R. L. Webb; Solutions of nonlinear equations in cones and positive
linear operators, \emph{J. Lond. Math. Soc., (2)} 82 (2010), 420-436.

\bibitem{Webb3} J. R. L. Webb, G. Infante; Positive solutions of nonlocal boundary value
problems involving integral conditions, \emph{NoDEA Nonlinear Differential Equations Appl.,} 15 (2008), 45-67.

\bibitem{Webb4} J. R. L. Webb, G. Infante; Positive solutions of nonlocal boundary value
problems: a unified approach, \emph{J. Lond. Math. Soc., (2)} 74 (2006), 673-693.

\bibitem{Webb5} J. R. L. Webb, G. Infante; Non-local boundary value problems of arbitrary order, \emph{J. Lond.
    Math. Soc., (2)} 79 (2009), 238-258.

\bibitem{Webb6} J. R. L. Webb; Positive solutions of a boundary value problem with integral boundary condition,
\emph{ Electron. J. Diff. Eqns.,} \textbf{55} (2011), 1--10.

\bibitem{Webb7} J. R. L. Webb; Nonexistence of positive solutions of nonlinear boundary value problems,
\emph{ Electron. J. Qual. Theory Differ. Equ.,} \textbf{61} (2012), 1--21.





\bibitem{Xu}
X. Xu; Multiplicity results for positive solutions of some semi-positone
  three-point boundary value problems, \emph{J. Math. Anal. Appl.,} \textbf{291} (2004), 673--689.

\end{thebibliography}
\end{document}